\newtheoremstyle{break}
  {}
  {}
  {\itshape}
  {}
  {\bfseries}
  {.}
  {\newline}
  {}
\newtheoremstyle{rq}
  {}
  {}
  {\slshape}
  {}
  {\bfseries}
  {.}
  {3pt}
  {}
\newtheoremstyle{exemple}
  {}
  {}
  {\upshape}
  {}
  {\bfseries}
  {.}
  {3pt}
  {}
\newtheoremstyle{fact}
  {}
  {}
  {\slshape}
  {}
  {\bfseries}
  {.}
  {2pt}
  {}
\theoremstyle{fact}
\newtheorem*{fact*}{Fact}
\theoremstyle{break}
\newtheorem{thm}{Theorem}[section]
\newtheorem{cor}[thm]{Corollary}
\newtheorem{lem}[thm]{Lemma}
\newtheorem{prop}[thm]{Proposition}
\newtheorem{defi}[thm]{Definition}
\theoremstyle{rq}
\newtheorem{rem}[thm]{Remark}
\newtheorem{qt}{Question}
\theoremstyle{exemple}
\newcommand{\RR}{\mathbb R}
\newcommand{\CC}{\mathbb C}
\newcommand{\QQ}{\mathbb Q}
\newcommand{\ZZ}{\mathbb Z}
\DeclareMathOperator{\coh}{H}
\DeclareMathOperator{\tr}{tr}
\DeclareMathOperator{\cc}{c}
\newcommand{\HH}{\mathbb{H}}
\newcommand{\cH}{\mathcal{H}}
\newcommand{\cF}{\mathcal{F}}
\newcommand{\cE}{\mathcal{E}}
\newcommand{\cJ}{\mathcal{J}}
\newcommand{\vsh}{\textsc{vhs}}
\newcommand{\drel}{\mathrm{d}_{X\mid B}}
\newcommand{\To}{\longrightarrow}
\newcommand{\set}[1]{\left\{#1\right\}}
\title[Linear K\"ahler groups]{Smooth families of tori and linear K\"ahler groups}
\date{\today}
\author{Beno\^it Claudon}
\address{Universit\'e de Lorraine, Institut \'Elie Cartan Nancy, UMR 7502, B.P. 70239, 54506 Vand\oe uvre-l\`es-Nancy Cedex, France}
\email{Benoit.Claudon@univ-lorraine.fr}
\thanks{It is our pleasure to thank D. Mégy for interesting discussions on the Hodge theoretical part of this paper. We are also grateful to F. Campana and P. Eyssidieux for having shared their insights during our previous collaborations. We finally are indebted to the referee for his careful reading and for pointing out a flaw in the first version of this article.}
\begin{document}

\maketitle

\selectlanguage{french}
\begin{abstract}
Cette courte note améliore les résultats de l'article \cite{CCE2} et peut donc être considérée comme un \emph{addendum} à ce dernier. Nous y établissons qu'un groupe kählérien linéiare peut être réalisé comme le groupe fondamental d'une variété projective lisse. Pour y parvenir, nous étudions certaines déformations relatives de l'espace total d'une famille lisse de tores, et ce dans un contexte équivariant.
\end{abstract}

\selectlanguage{english}

\begin{abstract}
That short note, meant as an \emph{addendum} to \cite{CCE2}, enhances the results contained in \emph{loc. cit.} In particular it is proven here that a linear Kähler group is already the fundamental group of a smooth complex projective variety. This is achieved by studying the relative deformation of the total space of a smooth family of tori in an equivariant context.
\end{abstract}

\section{Introduction}\label{sec:intro}

In his seminal paper on compact complex surfaces \cite{Kod}, Kodaira proved that a compact Kähler surface can be deformed to an algebraic one (Theorem 16.1 in \emph{loc. cit.}). However since the groundbreaking works of Voisin \cite{V04,V06} we know that this is specific to the surface case: in dimension at least 4, there exists compact Kähler manifolds which do not have the cohomology algebra of a projective manifold (and in particular cannot be deformed to such an algebraic manifold). The examples of Voisin being bimeromorphic to a torus (or to a projective bundle over a torus), it leaves open the following question concerning the fundamental groups of compact Kähler manifolds (known as Kähler groups).
\begin{qt}\label{qt:kahler group}
Can any Kähler group be realized as the fundamental group of a smooth complex projective variety? In other terms, is any Kähler group already a projective one?
\end{qt}

Going back to Kodaira's Theorem, Buchdahl gave another proof of this result in \cite{B06,B08}, providing by the way a useful criterion ensuring that a compact Kähler manifold can be approximated by projective ones. This criterion applies nicely to the case of smooth families\footnote{As recalled in Paragraph \ref{subs:jacob}, it is simply a holomorphic proper submersion whose fibres are complex tori.} of tori (this was already observed in \cite{CCE2}) and can even be used when the family is equivariant under the action of a finite group. 

\begin{thm}\label{th:th principal}
Let $f:X\to B$ be a smooth family of tori whose total space is compact Kähler and let us assume that $f$ is equivariant with respect to the action of a finite group $\Gamma$ on both $X$ and $B$. Then there exists a smooth family of tori of the form 
$$\mathcal{X}\stackrel{\pi}{\To} T\times B\stackrel{p_1}{\To} T$$
with $T$ a polydisk  and a point $t_0\in T$ such that the family $\mathcal{X}_{t_0}:=(\pi\circ p_T)^{-1}(t_0)$ is (isomorphic to) the initial one. This family has moreover the following properties:
\begin{enumerate}[$(i)$]
\item the group $\Gamma$ acts on $\mathcal{X}$,
\item the projection $\pi$ is equivariant with respect to this action on $\mathcal{X}$ and to the action on $T\times B$ induced by the trivial one on $T$,
\item the set $T_{alg}$ of points $t\in T$ such that $\mathcal{X}_t\to B$ has a multisection and its fibres are abelian varieties is dense near $t_0$.
\end{enumerate}
\end{thm}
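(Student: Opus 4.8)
The plan is to describe smooth families of tori through their weight-one variation of Hodge structure and to realise the desired family $\XX$ as the universal deformation of the corresponding Hodge sub-bundle, keeping the topological data fixed. First I would encode $f:X\to B$ by the local system $\Lambda=R^1f_*\ZZ$, the associated flat holomorphic bundle $\cH=\Lambda\otimes_\ZZ\mathcal{O}_B$, and the Hodge sub-bundle $F=F^1\subset\cH$ (a holomorphic sub-bundle of rank $g=$ the fibre dimension, with $F\oplus\overline F=\cH$ along each fibre); the fibres of $f$ are then recovered as $\cH/F$ modulo $\Lambda$, together if necessary with the torsor class measuring the failure of a global zero section. Since the local system and the base are to be left unchanged, deforming $f$ as a family of tori amounts to deforming $F$ inside the fixed bundle $\cH$. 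Near $F$ such sub-bundles are exactly the graphs of small holomorphic morphisms $F\to\cH/F$, so they are parametrised, without any obstruction, by a neighbourhood of the origin in the finite-dimensional vector space $H^0(B,\mathcal{H}om(F,\cH/F))$ (finite-dimensionality coming from the compactness of $B$). Choosing a polydisk $T$ around $0$ in this space and setting $\XX_{(t,b)}=(\cH/F_t)_b/\Lambda_b$ produces the family $\XX\stackrel{\pi}{\To}T\times B$ with $\XX_{t_0}\cong X$ for $t_0=0$.

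To obtain the equivariance I would note that the $\GG$-action on the pair $(X,B)$ induces a $\GG$-action on $\Lambda$ and on $\cH$ fixing $F$, hence a linear $\GG$-action on $H^0(B,\mathcal{H}om(F,\cH/F))$. Restricting $T$ to a polydisk inside the invariant subspace $H^0(B,\mathcal{H}om(F,\cH/F))^\GG$ forces every sub-bundle $F_t$ to be $\GG$-invariant; consequently each family $\XX_t\to B$ is itself $\GG$-equivariant and $\GG$ acts on the total space $\XX$ fixing the $T$-coordinate. This yields properties $(i)$ and $(ii)$, the action on $T\times B$ being the trivial one on the $T$ factor.

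The heart of the argument is property $(iii)$. The fibres of $\XX_t\to B$ are abelian varieties precisely when there is a polarisation, that is a $\GG$-invariant rational class $h\in H^0(B,\wedge^2\Lambda^\vee)^\GG_\QQ$ which is of type $(1,1)$ for $F_t$ — the Riemann relation $h|_{F_t}=0$ — and positive. The restriction to the fibres of a Kähler class of the compact Kähler manifold $X$ furnishes, at $t_0$, a positive $(1,1)$-form, so positivity is an open condition that persists near $t_0$; the remaining, and decisive, point is that the set of parameters $t$ for which a rational class satisfies $h|_{F_t}=0$ is dense near $t_0$. This is exactly what Buchdahl's criterion provides in the present situation: it guarantees that the infinitesimal variation of the Hodge structure carried by $F$ is non-degenerate enough for these Hodge loci to accumulate at $t_0$, and the whole discussion can be carried out inside the $\GG$-invariant part of the cohomology so as to keep $h$ invariant. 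Once the fibres are abelian varieties polarised by such an $h$ over the projective base $B$, a multisection is obtained from the fact that the associated torsor class becomes torsion, so the corresponding $t$ lie in $T_{alg}$, which is thus dense near $t_0$.

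The step I expect to be the main obstacle is this last density statement: controlling the variation of Hodge structure finely enough to produce rational classes of type $(1,1)$ arbitrarily close to $t_0$ while simultaneously preserving positivity and $\GG$-invariance, and then upgrading the resulting abelian fibration to one admitting a multisection. Buchdahl's criterion is precisely the tool designed to overcome the density part of this difficulty, and checking that it applies verbatim in the equivariant, relative setting is where the real work lies.
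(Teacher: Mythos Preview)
Your outline captures correctly the first half of the paper's argument: deforming the Hodge sub-bundle $F$ inside the fixed flat bundle $\cH$, passing to the $\Gamma$-invariant part of the parameter space, and invoking (equivariant) Buchdahl to get a dense set of parameters where the fibres become abelian varieties. This is exactly the paper's ``$U$-direction''.

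The genuine gap is the multisection part. Your sentence ``Once the fibres are abelian varieties \dots\ a multisection is obtained from the fact that the associated torsor class becomes torsion'' is the crux, and it is not justified. The K\"ahler hypothesis on $X$ tells you only that the topological invariant $\cc(\eta(f))\in\coh^2(B,\HH_\ZZ)$ is torsion; it does \emph{not} force $\eta(f)\in\coh^1(B,\cJ(\cH))$ itself to be torsion, and deforming the Hodge filtration alone does nothing to improve this: the obstruction to torsion sits in the image of $\coh^1(B,\cE)$, which you never touch. There exist smooth families of abelian varieties over a compact base which are genuine torsors with no multisection. So your parameter space $T$, taken only inside $H^0(B,\mathcal{H}om(F,\cH/F))^\Gamma$, is too small: for a generic $t\in T_{alg}$ as you have defined it the fibration $\mathcal{X}_t\to B$ will have abelian-variety fibres but no multisection.

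The paper repairs this with a \emph{second}, independent deformation direction $V\subset\coh^1(B,\cE)^\Gamma$, built from translations in the fibres (Propositions~2.4 and~2.7). Over $V$ one twists the torsor class $\eta$ by $\exp(v)$, and since $\cc(\eta)$ is torsion one can choose $v$ in the image of $\coh^1(B,\HH_\QQ)^\Gamma$ so that $\eta+\exp(v)$ becomes torsion, hence yields a multisection. The full parameter space is then $T=V\times U$, and $T_{alg}$ is a product of dense subsets. A related point you elide: writing $\mathcal{X}_{(t,b)}=(\cH/F_t)_b/\Lambda_b$ gives the \emph{Jacobian} fibration, not a deformation of $X$ unless $\eta(f)=0$; the paper's Proposition~3.3 is what allows one to extend the class $\eta(f)$ across $U$ (this uses $\cc(\eta)$ torsion and the surjectivity of $\coh^1(B,\HH_\RR)\to\coh^1(B,\cE)$ from Lemma~3.2). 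Finally, $B$ is not assumed projective in the theorem, so you cannot appeal to projectivity of the base.
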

The last sentence means that the closure\footnote{The set $T_{alg}$ is the set of parameters such that the corresponding deformation is \og as algebraic as possible\fg. It should be noted that $X_t$ is not necessarily projective (even when $t\in T_{alg}$) since $B$ is not assumed to be so.} of $T_{alg}$ contains an open neighbourhood of $t_0$. Up to shrinking $T$ we can assume that $\overline{T_{alg}}$ is thus the whole of $T$.

In particular, Theorem \ref{th:th principal} shows that the problem of approximating compact Kähler manifolds with projective ones has a positive answer in the case of smooth tori families.

\begin{cor}\label{cor: Kodaira famille tores}
Let $X$ be a compact Kähler manifold and let us assume that there is a finite étale Galois cover $\tilde{X}\to X$ which is the total space of a smooth family of tori over a projective base (equivariant under the action of the Galois group). Then $X$ can be approximated by projective manifolds: it is the central fibre of a smooth morphism $(\mathcal{X},X)\to (T,t_0)$ (with $T$ smooth) and the set of $t\in T$ such that $\mathcal{X}_t$ is projective is dense near $t_0$.
\end{cor}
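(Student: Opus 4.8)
The plan is to apply Theorem \ref{th:th principal} to the Galois cover $\tilde{X}$ and then to descend the resulting equivariant deformation back down to $X$ by passing to the quotient under the Galois group. Write $\Gamma$ for the Galois group of $\tilde{X}\to X$, so that $X=\tilde{X}/\Gamma$ and $\Gamma$ acts freely on $\tilde{X}$. By hypothesis $f:\tilde{X}\to B$ is a smooth family of tori with $\tilde{X}$ compact Kähler, equivariant under $\Gamma$, and $B$ is projective. Theorem \ref{th:th principal} then produces a smooth family $\mathcal{X}\stackrel{\pi}{\To}T\times B\stackrel{p_1}{\To}T$ and a point $t_0\in T$ with central fibre $\mathcal{X}_{t_0}\cong\tilde{X}$ (as families over $B$), together with a $\Gamma$-action on $\mathcal{X}$ for which $\pi$ is equivariant and the induced action on the $T$-factor is trivial. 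Consequently $\Gamma$ preserves every fibre $\mathcal{X}_t$ of $\mathcal{X}\to T$, and on the central fibre it restricts to the original free action on $\tilde{X}$.

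The next step is to make the $\Gamma$-action free in a neighbourhood of $t_0$. The locus of points of $\mathcal{X}$ fixed by some $\gamma\neq e$ is closed, and the composite $\mathcal{X}\to T$ is proper (being $\pi$ followed by the projection $T\times B\to T$, with $B$ compact), so the image of this fixed locus in $T$ is closed and, since the action on $\mathcal{X}_{t_0}=\tilde{X}$ is free, it avoids $t_0$. Shrinking $T$ to the complement of that image (which keeps $\overline{T_{alg}}$ a neighbourhood of $t_0$), the group $\Gamma$ acts freely on all of $\mathcal{X}$. The quotient $\mathcal{Y}:=\mathcal{X}/\Gamma$ is then a complex manifold equipped with a smooth proper morphism $\mathcal{Y}\to T$ whose fibre over $t$ is $\mathcal{X}_t/\Gamma$; in particular its central fibre is $\mathcal{X}_{t_0}/\Gamma=\tilde{X}/\Gamma=X$. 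This already realizes $X$ as the central fibre of a smooth deformation over the smooth base $T$.

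It remains to identify the projective fibres. For $t\in T_{alg}$ the family $\mathcal{X}_t\to B$ has a multisection and abelian-variety fibres; as $B$ is now assumed projective, the total space $\mathcal{X}_t$ is itself projective (the relative polarization coming from the abelian fibres, together with the multisection and an ample class pulled back from $B$, produces an ample line bundle — equivalently $\mathcal{X}_t$ is a Kähler Moishezon manifold, hence projective). A free finite group action then descends any ample line bundle $L$ on $\mathcal{X}_t$ to an ample line bundle on $\mathcal{Y}_t=\mathcal{X}_t/\Gamma$, since $\bigotimes_{\gamma\in\Gamma}\gamma^*L$ is $\Gamma$-invariant and ample and descends along the free quotient. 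Thus $\mathcal{Y}_t$ is projective for every $t\in T_{alg}$, and because $\overline{T_{alg}}$ contains a neighbourhood of $t_0$ by Theorem \ref{th:th principal}\,$(iii)$, the set of $t$ with $\mathcal{Y}_t$ projective is dense near $t_0$, as claimed.

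The step I expect to be the main obstacle is the projectivity of the intermediate total spaces $\mathcal{X}_t$ for $t\in T_{alg}$: this is exactly where the hypothesis that $B$ is projective (deliberately absent from Theorem \ref{th:th principal}, per its footnote) must be invoked, converting the \emph{as algebraic as possible} fibres parametrized by $T_{alg}$ into genuinely projective manifolds before the quotient is taken. Everything else — freeness on nearby fibres, smoothness and properness of the quotient family, and descent of ampleness — is formal once that point is secured.
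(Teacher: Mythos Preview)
Your proof is correct and follows essentially the same route as the paper's: apply Theorem~\ref{th:th principal}, shrink $T$ so that $\Gamma$ acts freely, pass to the quotient, and verify projectivity of the fibres over $T_{alg}$. The only difference is that the paper handles the step you flagged as the main obstacle by a direct citation of \cite{C81} (a compact K\"ahler manifold fibred in abelian varieties over a projective base with a multisection is projective), and for descent of projectivity simply invokes the fact that the quotient of a projective manifold by a finite group is projective, where you spell out an explicit linearization argument.
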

\begin{proof}
We can apply Theorem \ref{th:th principal} to the smooth family of tori $f:\tilde{X}\to B$ and to action of $\Gamma:=\mathrm{Gal}(\tilde{X}/X)$. We get a smooth deformation $\tilde{\mathcal{X}}\to T\times B$ of the initial family (over $t_0$) and we can assume the set of points $t\in T$ such that $\tilde{\mathcal{X}}_t\To B$ has a multisection and its fibres are abelian varieties is dense in $T$. The manifolds $\tilde{\mathcal{X}}_t$ having these properties are thus projective according to \cite{C81}. Since the action of $\Gamma$ is free on $\tilde{\mathcal{X}}_{t_0}$ we can assume that it is also free on $\tilde{\mathcal{X}}$ (up to shrinking $T$). The family $\mathcal{X}:=\tilde{\mathcal{X}}/\Gamma\To T$ is thus a smooth deformation of $\mathcal{X}_{t_0}\simeq \tilde{X}/\Gamma\simeq X$ and the set of points $t\in T$ such that $\mathcal{X}_t$ is projective is dense in $T$ (the quotient of a projective manifold by a finite group is still projective).
\end{proof}

Theorem \ref{th:th principal} together with the structure results obtained in \cite{CCE1} yields a definitive answer to Question \ref{qt:kahler group} in the linear case.
\begin{cor}\label{cor:kahler linéaire}
A Kähler group which is linear is also a projective one: the fundamental group of a compact Kähler manifold can be realised as the fundamental group of a smooth projective variety if it is a linear group.
\end{cor}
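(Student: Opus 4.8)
The plan is to reduce the statement to the situation already settled by Corollary \ref{cor: Kodaira famille tores}, the only extra ingredient being the structural description of linear Kähler groups provided by \cite{CCE1}. Let $G$ be a linear Kähler group and fix a realisation $G=\pi_1(X)$ with $X$ a compact Kähler manifold. Being linear, $G$ carries a faithful finite dimensional representation $\rho\colon G\to\gl_n(\CC)$. What I want to produce is a compact Kähler manifold $X'$ with $\pi_1(X')\simeq G$ which admits a finite étale Galois cover that is the total space of a \emph{smooth} family of tori over a \emph{projective} base, the whole configuration being equivariant under the Galois group; Corollary \ref{cor: Kodaira famille tores} then finishes the job.

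First I would feed $\rho$ into the factorisation machinery of \cite{CCE1}. Its conclusion, suitably interpreted, is that after passing to a well chosen finite étale cover $\tilde X\to X$ with Galois group $\Gamma$, the manifold $\tilde X$ is bimeromorphic to the total space of a smooth family of tori $f\colon\tilde X\to B$ over a smooth projective base $B$, and this fibration can be arranged to be $\Gamma$-equivariant: the abelian part of $\rho$ accounts for the torus fibres while the rigid part is carried by the projective base $B$. Since blow-ups and blow-downs between compact Kähler manifolds leave the fundamental group unchanged, replacing $\tilde X$ by the genuine smooth torus-family model costs nothing at the level of $\pi_1$; moreover an equivariant modification centred in the (here everywhere) free locus of a free action keeps the action free, so the quotient $X':=\tilde X/\Gamma$ is again a manifold and still satisfies $\pi_1(X')\simeq G$.

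With $X'$ in this normal form I would apply Corollary \ref{cor: Kodaira famille tores} verbatim to the $\Gamma$-equivariant smooth family of tori $f\colon\tilde X\to B$. It furnishes a smooth deformation $(\mathcal X,X')\to(T,t_0)$ whose members $\mathcal X_t$ are projective for a set of parameters dense near $t_0$. By Ehresmann's theorem the fibres of a smooth proper family are all diffeomorphic to the central one, whence $\pi_1(\mathcal X_t)\simeq\pi_1(X')\simeq G$ for every $t$; selecting any projective $\mathcal X_t$ exhibits $G$ as the fundamental group of a smooth projective variety, which is the assertion.

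The delicate point lies entirely in the second step. One must verify that the output of \cite{CCE1} genuinely upgrades to a \emph{submersive} family of tori over a projective base and, above all, that every bimeromorphic modification needed to reach this model can be carried out $\Gamma$-equivariantly and compatibly with the freeness of the Galois action — failing which the quotient $X'$ would no longer be a smooth manifold with the right fundamental group. This equivariance is precisely the feature that the plain argument of \cite{CCE2} could not supply, and it is exactly what makes the equivariant deformation statement of Theorem \ref{th:th principal}, rather than its non-equivariant counterpart, indispensable for the conclusion.
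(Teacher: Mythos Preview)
Your reading of the structure theorem from \cite{CCE1} is where the argument breaks. That theorem does \emph{not} assert that the étale cover $\tilde X$ (called $Y$ in the paper) is itself bimeromorphic to a smooth family of tori. What it produces is the Shafarevich morphism $sh_\rho\colon Y\to W$, and it is $W$---the \emph{image}, not $Y$---whose Iitaka fibration $W\to B$ is the smooth family of tori over a projective base. The map $Y\to W$ has positive-dimensional fibres in general (those whose $\pi_1$-image in $\pi_1(Y)$ is finite), so $Y$ and $W$ are not bimeromorphic and you cannot transplant the tori-family structure back onto $\tilde X$. What remains true, and what the paper uses, is only that $\pi_1(Y)\simeq\pi_1(W)$; this requires a separate argument exploiting the torsion-freeness of $\pi_1(Y)\hookrightarrow\gl_n(\CC)$ to kill both $\pi_1(F)_Y$ and the orbifold correction.

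This misidentification cascades into the freeness issue you flag at the end. Because the relevant $\Gamma$-action is the one on $W$ (and on $B$), not on $Y$, there is no reason for it to be free: the Shafarevich and Iitaka constructions are functorial, hence $\Gamma$-equivariant, but they do not preserve freeness---the paper states this explicitly. Consequently your plan to invoke Corollary~\ref{cor: Kodaira famille tores} directly collapses: there is no manifold $X'$ with $\pi_1(X')\simeq G$ admitting an étale Galois cover that is the smooth family of tori. The paper instead applies Theorem~\ref{th:th principal} to $W$ to obtain an equivariant projective deformation $W_{alg}$, then repairs the possibly non-free action via Serre's trick: choose a simply connected projective $\mathbb P$ with a free $\Gamma$-action and set $X_{alg}:=(W_{alg}\times\mathbb P)/\Gamma$. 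Lemma~\ref{lem:gp extension} identifies the resulting $\pi_1$-extension with the one coming from $Y\to X$. This device, together with the $\pi_1(Y)\simeq\pi_1(W)$ step, is the missing content in your proposal.
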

\noindent Let us recall that the main result of \cite{CCE2} is a version of the latter corollary \emph{up to finite index}. It is stated there that a linear Kähler group has a finite index subgroup which is projective. In the sequel of this article, we will explain how to get rid of this finite index subgroup. Proofs of Theorem \ref{th:th principal} and Corollary \ref{cor:kahler linéaire} will be given in Paragraph \ref{subs:conclusion}.\\

Before presenting the ingredients involved in these proofs, let us give a word of explanation on the relative deformation constructed in Theorem \ref{th:th principal} (the reader is advised to consult \cite[\S 3.4.2, p. 191]{S06} for the notions concerning relative deformations). The infinitesimal relative deformations of a smooth morphism $f:X\to B$ are described by the space $\coh^1(X,T_{X\mid B})$ (\emph{cf.} Lemma 3.4.7 in \emph{loc. cit.})  
and the Leray spectral sequence for $T_{X\mid B}$ and $f$ gives a piece of exact sequence:
\begin{equation}\label{eq:Leary tangent relatif}
0\To \coh^1(B,f_*T_{X\mid B})\To \coh^1(X,T_{X\mid B})\To \coh^0(B,R^1f_*T_{X\mid B}).
\end{equation}
In our situation (smooth families of tori), both sides of (\ref{eq:Leary tangent relatif}) correspond to a different type of relative deformation. The left-hand side parametrizes relative deformation using translations in the fibres of $f$ (see the content of Proposition \ref{prop:deformation c constant}) whereas the right-hand side has to do with deformation of the variation of Hodge structures induced by $f$ (these deformations are identified in Paragraph \ref{subs:buchdahl}). In a sense, the strategy of the proof is thus dictated by the terms appearing in (\ref{eq:Leary tangent relatif}).

\section{Smooth families of tori}\label{sec:smooth family}
We recall here some basic facts about smooth families of tori: their description as torsors and their deformations. We then put this study in an equivariant framework. Some facts recalled in Paragraph \ref{subs:jacob} already appear in \cite[\S 2]{Nak99}. Our reference concerning Hodge theory is \cite{V02}. For more advanced material on Jacobian fibrations, the reader is referred to \cite{Sch,BP13} and to the references therein.

\subsection{Jacobian fibrations}\label{subs:jacob}
 
Let $f:X\to B$ be a proper submersion between complex manifolds. We assume moreover that the fibres of $f$ are complex tori. We shall call such a fibration a \emph{smooth family of tori}. This fibration determines\footnote{Since in the sequel we will have to change the Hodge structure keeping the local system fixed, we will use calligraphic letters when referring to a \vsh~and straight ones to denote the underlying local system.} a variation of Hodge structures $\cH$  (\vsh~for short) of weight $-1$ and rank $2g$ where $g:=\dim(f)$ is the relative dimension of $f$. Let us recall that, in this weight one situation, a \vsh~consists of an even rank local system $\HH_\ZZ$ and a holomorphic subbundle $\cF$ of $\mathcal{V}:= \HH_\ZZ\otimes \mathcal{O}_B$ satisfying the Hodge symmetry:
$$\mathcal{V}_b=\cF_b\oplus\bar{\cF}_b$$
for any $b\in B$. The \vsh~associated with a tori family is given by the following data: the underlying local system is
$$\HH_\ZZ:=\mathrm{Hom}(R^1f_*\ZZ_X,\ZZ_B),$$
the Hodge filtration being given by
$$\cF:=\mathrm{Hom}(R^1f_*\mathcal{O}_X,\mathcal{O}_B)\subset \HH_\ZZ\otimes \mathcal{O}_B.$$
Let us remark that the duality
$$R^1f_*\ZZ_X\otimes R^{2g-1}f_*\ZZ_X\To R^{2g}f_*\ZZ_X\simeq \ZZ_B$$
shows that $\HH_\ZZ$ is isomorphic to $R^{2g-1}f_*\ZZ_X$.

With these data we can associate a particular family of tori. Let us consider the injection
$$\HH_\ZZ\hookrightarrow \cE:=\HH_\ZZ\otimes \mathcal{O}_B/\cF\simeq f_*T_{X\mid B}$$
to this end. It can be used to define an action of $\HH_\ZZ$ on the total space of $\cE$ and the quotient gives rise to a smooth family of tori which will be denoted
$$p:J(\cH)\To B$$
and called the \emph{Jacobian fibration} associated with $\cH$. This fibration comes endowed with a natural section (the image of the zero section of $\cE$) and using it as the origins of the fibres we can define an abelian group law on the sections of $p$. We will denote by $\cJ(\cH)$ this sheaf of abelian groups which sits in the following short exact sequence:
\begin{equation}\label{eq:suite exacte sections}
0\To \HH_\ZZ\To \cE \To \cJ(\cH)\To 0.
\end{equation}

Let us say a word about polarizations (inspired from \cite[p. 15-17]{Nak99}). A real polarization of $\cH$ is a flat non degenerate skew-symmetric bilinear\footnote{Here and below, $\HH_\QQ$ and $\HH_\RR$ stand for $\HH_\ZZ\otimes\QQ$ and $\HH_\ZZ\otimes\RR$ as usual.} form
$$q:\HH_\RR\times \HH_\RR\To \RR_B$$
satisfying the Hodge-Riemann relations:
$$q(\cF,\cF)=0\quad \mathrm{and}\quad \forall\, 0\neq x\in \cF,\,\, i q(x,\bar{x})>0.$$
The polarization is said to be rational if it defined on $\HH_\QQ$ (with values in $\QQ_B$). If such a rational polarization exists, we shall say that $\cH$ is $\QQ$-polarizable. In this case, the corresponding tori are abelian varieties.

Once such a polarization is fixed, the period domain of $(\HH_\ZZ,q)$ can be identified with the Siegel half space
$$\mathbf{H}_g:=\left\{ \tau\in \mathrm{M}_g(\CC)\mid \tau^t=\tau\,\mathrm{and}\,\Im\mathrm{m}(\tau)>0 \right\}$$
and the representation associated with the local system has its value in the symplectic group
$$\pi_1(B)\To \mathrm{Sp}_g(\ZZ).$$
This can then be used to define an action of $\pi_1(B)\ltimes \ZZ^{2g}$ on $\tilde{B}\times \CC^g$, the resulting quotient being another realization of the Jacobian fibration. In this case, the Jacobian fibration is endowed with a relative Kähler form $\omega_q$: its restriction to any fiber is a Kähler metric. If $q$ is rational, the fibration $J(\cH)\to B$ is then a locally projective morphism.

In the reverse direction, starting from a smooth family of tori $f:X\to B$ inducing $\cH$, it is obvious that a (relative) Kähler metric $\omega$ on $X$ induces a real polarization $q_\omega$ on $\HH_\RR$.

\subsection{Smooth families of tori as torsors}

Now it is well known that the initial family $f:X\to B$ can be seen as a torsor under the Jacobian fibration and as such can be described by an element
$$\eta(f)\in \coh^1(B,\cJ(\cH)).$$
Here is a simple description of the class $\eta(f)$. If $(U_i)$ is an open cover of $B$ such that $f^{-1}(U_i)\to U_i$ has a section $\sigma_i$ then the quantity $\eta_{ij}:=\sigma_i-\sigma_j$ is a perfectly well defined cocycle with values in $\cJ(\cH)$. Conversely, given a cohomology class $\eta$ represented by a cocycle $(\eta_{ij})$, we can look at the isomorphisms induced by the sections $\eta_{ij}$ (translations in the fibres):
$$\tr(\eta_{ij}):p^{-1}(U_{ij})\stackrel{\sim}{\To}p^{-1}(U_{ij})$$
defined by the formulas:
$$\tr(\eta_{ij})(x)=x+\eta_{ij}(p(x))$$
(the addition referring to the one in $J(\cH)$). The isomorphisms $\tr(\eta_{ij})$ satisfy a cocyle relation and we can use them to glue the fibrations $p^{-1}(U_{i})\to U_i$ into a new family of tori $J(\cH)^\eta\to B$ (and both mechanisms are inverse one to each other).
\begin{prop}\label{prop:smooth family = torsor}
There is a one-to-one correspondence between isomorphism classes of smooth families of tori $f:X\to B$ inducing $\cH$ and the cohomology classes $\eta\in \coh^1(B,\cJ(\cH)).$ In particular, if $f:X\to B$ and $g:Y\to B$ are smooth families of tori inducing the same \vsh~on B, we can glue them over $B$ to get a new family $h:Z\to B$ such that $\eta(h)=\eta(f)+\eta(g)$.
\end{prop}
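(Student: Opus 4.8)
The plan is to reduce the statement to the standard dictionary between torsors under a sheaf of abelian groups and its first cohomology. First I would make the action of $\cJ(\cH)$ on an arbitrary family $f:X\To B$ inducing $\cH$ explicit. Through the identification $\cE\simeq f_*T_{X\mid B}$ a local section of $\cE$ is a relative holomorphic vector field, which on each fibre is translation invariant and whose time-one flow is an honest translation of the torus $X_b$; since translating by a period is the identity, the sublattice $\HH_\ZZ\subset\cE$ acts trivially, so the quotient $\cJ(\cH)=\cE/\HH_\ZZ$ acts on $X$ over $B$. On each fibre this action is simply transitive because $X_b$ is a torsor under the torus $\cE_b/\HH_{\ZZ,b}$. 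Local holomorphic sections of $f$ exist through every point, simply because $f$ is a submersion (implicit function theorem), so the sheaf of local sections of $f$ is genuinely a torsor under $\cJ(\cH)$.

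Granting this, I would check that the two assignments recalled just before the statement are mutually inverse. For the map sending a family to a class, I fix a cover $(U_i)$ over which $f$ admits sections $\sigma_i$, set $\eta_{ij}:=\sigma_i-\sigma_j\in\cJ(\cH)(U_{ij})$ via the simply transitive action, verify the cocycle relation on triple intersections, and let $\eta(f)$ be the resulting class in $\coh^1(B,\cJ(\cH))$. Replacing the $\sigma_i$ by other sections $\sigma_i'$ modifies the cocycle by the coboundary of $(\sigma_i-\sigma_i')$, and comparison on a common refinement shows independence of the cover, so $\eta(f)$ is well defined. In the other direction a cocycle is used to glue the local models $p^{-1}(U_i)\To U_i$ along the translations $\tr(\eta_{ij})$, the cocycle relation being exactly the associativity needed for the gluing. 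Feeding the zero sections of these local models back into the first construction returns $(\eta_{ij})$, while gluing the sections $\sigma_i$ identifies $X$ with $J(\cH)^{\eta(f)}$; the two maps are thus inverse to one another.

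The only point requiring the geometry of tori is that the correspondence descends to isomorphism classes (for the fixed \vsh~$\cH$). Here I would use that any isomorphism $\phi:X\To Y$ over $B$ inducing the identity on $\cH$ is automatically a fibrewise translation: on each fibre it is an isomorphism of tori inducing the identity on $\HH_\ZZ$, hence a translation. Writing $\phi(x)=x+t(p(x))$ for a global section $t$ of $\cJ(\cH)$ one gets $\phi\sigma_i-\phi\sigma_j=\sigma_i-\sigma_j$, so the classes of $X$ and $Y$ coincide; conversely, equality of classes produces such an isomorphism through the gluing construction. This yields the announced bijection. The additive statement is then formal: over a common cover carrying sections $\sigma_i$ of $X$ and $\tau_i$ of $Y$, gluing the local models along $\tr\big((\sigma_i-\sigma_j)+(\tau_i-\tau_j)\big)$ produces a family $h:Z\To B$ (the Baer sum of the two torsors) whose defining cocycle is manifestly $\eta(f)+\eta(g)$.

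The part I expect to demand the most care is the very first one: checking that the identification $\cE\simeq f_*T_{X\mid B}$ really exponentiates to a holomorphic, fibrewise simply transitive action, and that isomorphisms of such families are forced to be fibrewise translations. These are the only genuinely geometric inputs; everything downstream is the formal torsor/$\coh^1$ machinery, and the existence of local sections—often the delicate point in statements of this kind—is here immediate from the submersion hypothesis. Much of this torsor description is in any case already available in \cite[\S 2]{Nak99}.
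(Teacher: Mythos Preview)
Your proposal is correct and follows the same approach as the paper. In fact the paper does not give a formal proof at all: the proposition is stated as well known, and the paragraph preceding it simply records the \v{C}ech description---sections $\sigma_i$ give the cocycle $\eta_{ij}=\sigma_i-\sigma_j$, and conversely a cocycle is used to glue the local pieces $p^{-1}(U_i)$ via the translations $\tr(\eta_{ij})$---with the remark that ``both mechanisms are inverse one to each other''. Your write-up supplies the details the paper omits (the exponentiation of $\cE\simeq f_*T_{X\mid B}$ to a simply transitive action, the existence of local sections from the submersion hypothesis, the fact that an isomorphism over $B$ inducing the identity on $\cH$ is a fibrewise translation, and the Baer-sum interpretation of the additivity), so there is nothing to compare beyond your version being more explicit.
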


With this in mind it is obvious that there always exists an étale morphism
$$J(\cH)^\eta\to J(\cH)^{m\cdot \eta}$$
for $\eta\in \coh^1(B,\cJ(\cH))$ and $m\ge1$ an integer (obtained by gluing the multiplication by $m$ defined on the Jacobian fibration). In particular, if $\eta$ is torsion (of order $m$ say), $J(\cH)^\eta$ appears as a finite étale cover of $J(\cH)^{m\cdot \eta}=J(\cH)$ and, in that case, the pull-back of the canonical section of $J(\cH)/B$ gives rise to a multisection of $J(\cH)^\eta/B$ (which is étale over $B$ by its very construction). This proves at least one implication of the following proposition.
\begin{prop}\label{prop:eta torsion multisection}
Let $f:X\to B$ be a smooth family of tori (inducing the \vsh~$\cH$). The class $\eta(f)$ is torsion in $\coh^1(B,\cJ(\cH))$ if and only if $f$ has a multisection. If it is the case, the multisection can be chosen étale over $B$.
\end{prop}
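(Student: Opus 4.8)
The plan is to prove the one remaining implication — that a multisection forces $\eta(f)$ to be torsion — since the converse, together with the fact that a torsion class admits an étale multisection, has just been established in the discussion above. So I would start by fixing a multisection $\Sigma \subset X$: a reduced irreducible subvariety for which $\pi := f|_\Sigma : \Sigma \to B$ is finite and surjective, say of degree $m$, and I would let $B^\circ \subset B$ be the dense open subset over which $\pi$ is étale. To lighten the notation, write $\eta := \eta(f)$.

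First I would produce a section of $J(\cH)^{m\eta}$ over $B^\circ$ by summing the $m$ points cut out by the multisection. Over $B^\circ$ the multisection defines a holomorphic section $b \mapsto \set{x_1(b), \ldots, x_m(b)}$ of the relative symmetric power $\mathrm{Sym}^m_{B^\circ}(X|_{B^\circ})$; composing with the fibrewise summation morphism $\mathrm{Sym}^m_B(X/B) \To J(\cH)^{m\eta}$ — adding $m$ points of a $J(\cH)_b$-torsor naturally lands in the $m$-th power torsor, whose class is $m\eta$ — yields a holomorphic section $s : B^\circ \To J(\cH)^{m\eta}$. In cocycle terms this is transparent: if $X$ is trivialized over $U_i$ by $\sigma_i$ and $x_k = \sigma_i + a_k^{(i)}$, the rule $\sum_k x_k := m\,\sigma_i + \sum_k a_k^{(i)}$ glues because the transition cocycle of $J(\cH)^{m\eta}$ is exactly $m\eta_{ij}$.

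The crucial step, and the one I expect to be the main obstacle, is extending $s$ across the branch divisor $D := B \setminus B^\circ$. My plan is to exploit that $J(\cH)^{m\eta} \To B$ is a proper smooth family of complex tori and therefore carries no rational curves in its fibres: consequently any rational section of such a family over the smooth base $B$ is automatically holomorphic, since resolving the indeterminacy of $s$ would force exceptional rational curves to be contracted to points inside torus fibres, showing that $s$ had no indeterminacy to begin with. Granting this, $s$ extends to a global holomorphic section of $J(\cH)^{m\eta}$, which trivializes the torsor; hence $m\eta = 0$ in $\coh^1(B,\cJ(\cH))$ and $\eta(f)$ is torsion, as desired.

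It is worth noting why the more direct transfer argument does not apply verbatim: pulling the family back along $\pi$ produces the tautological section of $X\times_B \Sigma \To \Sigma$, so that $\pi^*\eta = 0$, and a trace map $\coh^1(\Sigma,\pi^*\cJ(\cH)) \To \coh^1(B,\cJ(\cH))$ composing with $\pi^*$ to multiplication by $m$ would give $m\eta = 0$ immediately — but this requires $\pi$ to be genuinely étale, whereas a multisection is only generically so. The no-rational-curves property of torus fibrations is precisely what lets the summation approach sidestep the ramification along $D$ and still reach the conclusion.
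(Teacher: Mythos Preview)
Your summation argument is sound and takes a genuinely different route from the paper. In the remark immediately following the proposition, the paper recasts $\eta(f)$ via relative Deligne cohomology: it builds an exact sequence $0\to\cJ(\cH)\to\mathcal{D}_0(X/B)\to\ZZ_B\to 0$, where $\mathcal{D}_0(X/B)$ behaves as a sheaf of relative $0$-cycles, and identifies $\eta(f)=\delta_f(1)$ for the connecting map $\delta_f$. A multisection then furnishes a global section of $\mathcal{D}_0(X/B)$ of relative degree $m$, so $m$ lies in the image of $\coh^0(B,\mathcal{D}_0(X/B))\to\coh^0(B,\ZZ_B)$ and $m\cdot\eta(f)=\delta_f(m)=0$ falls out with no extension step. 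Your construction is precisely the geometric content of this (the map $\mathrm{Sym}^m_B(X)\to J(\cH)^{m\eta}$ is the Abel--Jacobi shadow of the Deligne sheaf), and it has the virtue of making the torsor arithmetic completely explicit; the cohomological packaging buys the paper a way to globalise without ever leaving $B$.

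One point in your write-up needs tightening. Invoking ``resolving the indeterminacy of $s$'' already presupposes that $s$ is \emph{meromorphic} on $B$, and a holomorphic map defined only on a dense open set of a smooth base into a proper family need not be (nothing rules out an essential singularity along $D$ a priori). Here meromorphicity does hold because $s$ is manufactured from the \emph{closed} subvariety $\Sigma$: after normalising, $\tilde\Sigma\to B$ is flat away from a codimension-$2$ locus $Z\subset B$ (miracle flatness, since $\tilde\Sigma$ is smooth in codimension one and $B$ is smooth), so $\tilde\Sigma$ defines a section of $\mathrm{Sym}^m_B(X)$, and hence $s$ extends holomorphically, over $B\setminus Z$. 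To cross $Z$ itself, the no-rational-curves argument is a bit delicate in the generality of the proposition (no K\"ahler hypothesis is in force); a more robust finish is Hartogs: on a small ball $U\subset B$ the torsor trivialises, $U\setminus Z$ is simply connected, so $s$ lifts to a holomorphic section of the vector bundle $\cE$ over $U\setminus Z$, and such sections extend across codimension $\ge 2$.
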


\begin{rem}\label{rem:definition intrinseque eta}
Using relative Deligne groups (as in \cite[\S 2]{Nak99}), we can give an intrinsic definition of the class $\eta(f)$ associated with a family of tori $f:X\to B$. Let us look at the following complex:
\begin{equation}\label{eq:complex Deligne}
\ZZ_{\mathcal{D}}^\bullet(X/B)\,:\, 0\To \ZZ_X\To \mathcal{O}_X \stackrel{\drel}{\To} \Omega^1_{X\mid B} \stackrel{\drel}{\To} \dots \stackrel{\drel}{\To} \Omega^{g-1}_{X\mid B}
\end{equation}
where $\drel$ denotes the relative differential. The complex (\ref{eq:complex Deligne}) sits obviously in the exact sequence
\begin{equation}\label{eq:suite complexe Deligne}
0\To \Omega^{\leq g-1}_{X\mid B}[-1]\To \ZZ_{\mathcal{D}}^\bullet(X/B)\To \ZZ_X\To 0
\end{equation}
where the last term is the complex given by the constant sheaf concentrated in degree 0. Taking derived direct image of (\ref{eq:suite complexe Deligne}) yields a triangle:
\begin{equation}\label{eq:suite exacte derivee}
\RR f_*\Omega^{\leq g-1}_{X\mid B}[-1]\To \RR f_*\ZZ_{\mathcal{D}}^\bullet(X/B)\To \RR f_*\ZZ_X \stackrel{+1}{\To} .
\end{equation}
On the other hand, we also have another triangle:
\begin{equation}\label{eq:suite derivee Hodge}
\RR f_*\Omega^{\geq g}_{X\mid B}\To \RR f_*\Omega^{\bullet}_{X\mid B}\To \RR f_*\Omega^{\leq g-1}_{X\mid B}\stackrel{+1}{\To} 
\end{equation}
and the long exact sequence of cohomology associated with (\ref{eq:suite derivee Hodge}) shows that
\begin{equation}\label{eq:iso derivee}
\coh^k \RR f_*\Omega^{\leq g-1}_{X\mid B}\simeq \left(R^kf_*\CC_X\otimes \mathcal{O}_B\right)/F^g
\end{equation}
where $F^g$ is the $g^{\mathrm{th}}$-step of the Hodge filtration on the \vsh~$R^kf_*\CC_X$. Now looking at the long exact sequence associated with (\ref{eq:suite exacte derivee}), we get:
$$R^{2g-1} f_*\ZZ_X\to \coh^{2g-1} \RR f_*\Omega^{\leq g-1}_{X\mid B} \to \coh^{2g} \RR f_*\ZZ_{\mathcal{D}}^{\bullet}(X/B)\to R^{2g}f_*\ZZ_X\to 0.$$
We can identify several terms in the sequence above: $R^{2g}f_*\ZZ_X$ is the constant sheaf $\ZZ_B$ and $R^{2g-1} f_*\ZZ_X$ is nothing but $\HH_\ZZ$. Using the isomorphism (\ref{eq:iso derivee}), the last piece of exact sequence reads as:
\begin{equation}\label{eq:extension Deligne}
0\To \cJ(\cH)\To \mathcal{D}_0(X/B):=\coh^{2g} \RR f_*\ZZ_{\mathcal{D}}^{\bullet}(X/B)\To \ZZ_B\To 0
\end{equation}
which is nothing but a relative version of \cite[cor. 12.27, p. 285]{V02}. So we have just associated with $f:X\to B$ an extension of the sheaf $\cJ(\cH)$ by $\ZZ_B$ and it is fairly clear that the cohomology class $\eta(f)$ is obtained as the image of $1$ under the connecting morphism
$$\delta_f:\coh^0(B,\ZZ_B)\To \coh^1(B,\cJ(\cH))$$
coming from (\ref{eq:extension Deligne}).

As the name suggests, the sheaf $\mathcal{D}_0(X/B)$ should be thought as a sheaf of relative 0-cycles of $X/B$. With this in mind, we see that a multisection of $f$ determines a global section of $\mathcal{D}_0(X/B)$ which is sent to some non zero integer in $\coh^0(B,\ZZ_B)$ (the relative degree of the corresponding cycle) and the description of $\eta(f)$ we got above implies that this class should be a torsion one, thus proving the second implication in Proposition \ref{prop:eta torsion multisection}.
\end{rem}

Now we can use the exact sequence (\ref{eq:suite exacte sections}) to define a topological invariant of a smooth family of tori. The long exact sequence associated with (\ref{eq:suite exacte sections}) reads as
$$\coh^1(B,\cE)\stackrel{\exp}{\To} \coh^1(B,\cJ(\cH))\stackrel{\cc}{\To} \coh^2(B,\HH_\ZZ).$$
The following was first observed by Kodaira in his study of elliptic surfaces \cite[Theorem 11.3]{Kod}.
\begin{prop}\label{prop:deformation c constant}
Let us fix a class $\eta_0$ in $\coh^1(B,\cJ(\cH))$. Then any finite dimensional vector space $V\subset\coh^1(B,\cE)$ appears as the base space of a smooth deformation
$$\pi:\mathcal{X}^{\eta_0}_V\to V\times B$$
such that if $v\in V$ the smooth family of tori
$$\pi_v:\mathcal{X}^{\eta_0}_v:=\pi^{-1}(\{v\}\times B)\to B$$
is such that $\eta(\pi_v)=\exp(v)+\eta_0$.

In particular, if $\cc(\eta_0)$ is torsion, $J(\cH)^{\eta_0}$ can be deformed (over $B$) to a smooth family of tori having a multisection.
\end{prop}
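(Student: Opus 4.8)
The plan is to realize the deformation by a parametrized version of the torsor gluing described just before Proposition \ref{prop:smooth family = torsor}, letting the translations vary holomorphically with the parameter $v\in V$. First I would record the combinatorial data. Fix an open cover $(U_i)$ of $B$ fine enough to compute the Čech cohomology of the sheaves $\cE$ and $\cJ(\cH)$, and fix a cocycle $(\eta^0_{ij})$ with values in $\cJ(\cH)$ representing $\eta_0$. The subspace $V\subset\coh^1(B,\cE)$ being finite dimensional, its inclusion is an element of $V^\vee\otimes\coh^1(B,\cE)=\coh^1(B,V^\vee\otimes\cE)$; choosing a Čech representative gives, for every $v\in V$, an $\cE$-valued cocycle $(\psi_{ij}(v))$ depending $\CC$-linearly on $v$ and whose class in $\coh^1(B,\cE)$ equals $v$.

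Next I would carry out the gluing over $V\times B$. Denoting by $\overline{\psi_{ij}(v)}$ the image of $\psi_{ij}(v)$ under $\cE\To\cJ(\cH)$, I glue the pieces $p^{-1}(U_i)\times V$ along $p^{-1}(U_{ij})\times V$ by the maps $\Phi_{ij}(x,v)=\bigl(\tr(\overline{\psi_{ij}(v)}+\eta^0_{ij})(x),v\bigr)$. Because translations compose additively, $\tr(a)\circ\tr(b)=\tr(a+b)$, the cocycle relation $\Phi_{ij}\circ\Phi_{jk}=\Phi_{ik}$ on triple overlaps reduces to the additive cocycle identities satisfied by $(\psi_{ij}(v))$ and $(\eta^0_{ij})$, so the gluing is legitimate. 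The maps $\Phi_{ij}$ are holomorphic in $(x,v)$ since $\psi_{ij}(v)$ is linear in $v$; hence the quotient is a complex manifold $\mathcal{X}^{\eta_0}_V$ and $\pi$ is a proper submersion onto $V\times B$ with complex tori as fibres. Restricting to a slice $\{v\}\times B$, the family $\pi_v$ is exactly the torsor obtained by gluing with the $\cJ(\cH)$-valued cocycle $(\overline{\psi_{ij}(v)}+\eta^0_{ij})$, so by the description preceding Proposition \ref{prop:smooth family = torsor} its class is $[\overline{\psi_{ij}(v)}]+[\eta^0_{ij}]=\exp(v)+\eta_0$, as required; in particular $\pi_0=J(\cH)^{\eta_0}$.

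For the final assertion I would exploit the exactness $\im(\exp)=\ker(\cc)$ coming from (\ref{eq:suite exacte sections}). If $\cc(\eta_0)$ is torsion, say $m\,\cc(\eta_0)=0$ with $m\ge1$, then $\cc(m\eta_0)=0$, so $m\eta_0=\exp(w)$ for some $w\in\coh^1(B,\cE)$. Since $\coh^1(B,\cE)$ is a $\CC$-vector space I may set $v_0:=-\tfrac{1}{m} w$, and as $\exp$ is a homomorphism of abelian groups, $m\bigl(\exp(v_0)+\eta_0\bigr)=-\exp(w)+m\eta_0=0$; thus $\exp(v_0)+\eta_0$ is torsion. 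Applying the construction above to any finite dimensional $V\ni v_0$ (for instance $V=\CC\,v_0$), the slice $\pi_{v_0}$ has torsion class $\exp(v_0)+\eta_0$ and therefore carries a multisection by Proposition \ref{prop:eta torsion multisection}, while $\pi_0=J(\cH)^{\eta_0}$; this is the desired deformation.

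I expect the only genuinely delicate points to be administrative rather than conceptual: ensuring that the translations depend holomorphically on the parameter (handled by taking a $\CC$-linear Čech representative of $V\hookrightarrow\coh^1(B,\cE)$) and, in the last step, the division by $m$, which is legitimate precisely because $\coh^1(B,\cE)$ is a vector space and $\exp$ is additive. The verification of the cocycle condition and of the smoothness of $\mathcal{X}^{\eta_0}_V$ is then routine.
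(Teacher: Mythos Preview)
Your argument is correct and is essentially the same as the paper's: the paper packages the $\CC$-linear family of Čech cocycles $(\psi_{ij}(v))$ as the tautological extension $0\to\cE\to\cE_V\to\underline{V}\to0$ with class $\mathrm{Id}_V\in V^\vee\otimes\coh^1(B,\cE)$, quotients its total space by $\HH_\ZZ$ to obtain a family with class $\exp(v)$ on each slice, and then glues with the constant family $V\times J(\cH)^{\eta_0}$ to add $\eta_0$; this is exactly your one-step gluing by $\tr(\overline{\psi_{ij}(v)}+\eta^0_{ij})$. The torsion argument (divide by $m$ in the vector space $\coh^1(B,\cE)$ and use $V=\CC\cdot v_0$) is identical.
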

\begin{proof}
There is a tautological vector bundle $\cE_V$ which is an extension:
$$0\To \cE\To \cE_V\To \underline{V}\To 0$$
where $\underline{V}$ is the trivial vector bundle. Its extension class is given by
$$\mathrm{Id}_V\in\mathrm{End}(V)\subset V^*\otimes\coh^1(B,\cE)\simeq\coh^1(B,\underline{V}^*\otimes\cE).$$
The local system $\HH_\ZZ$ acts on the total space of $\cE_V$ by translations and we can form the quotient. The manifold $\mathcal{Y}$ we obtain has a natural projection to the total space of $\underline{V}$. This is thus a smooth family of tori
$$\rho:\mathcal{Y}\To V\times B$$
and over a point $v\in V$ we get from the construction that $\eta(\rho_v):=\exp(v)$. Now we can glue the trivial family $V\times X\to V\times B$ and $\mathcal{Y}\to V\times B$ over $V\times B$ to get the sought family
$$\pi:\mathcal{X}\To V\times B.$$

If $\cc(\eta_0)$ is torsion then there exists an integer $m\ge1$ such that $m\cdot \eta_0=\exp(v_0)$ for some $v_0\in \coh^1(B,\cE)$. The latter being a vector space we can rewrite this equality as $m\cdot (\eta-\exp(v_0/m))=0$. The construction explained above with $\CC\cdot v_0\subset \coh^1(B,\cE)$ gives a smooth family of tori $\mathcal{X}\to \CC\times B$ such that $\eta(\mathcal{X}_0)=\eta_0$ and $\eta(\mathcal{X}_{-1/m})$ is torsion. The family $\mathcal{X}_{-1/m}\to B$ has thus a multisection according to Proposition \ref{prop:eta torsion multisection}.
\end{proof}

Let us remark that the situation corresponding to the second part of the preceding proposition occurs in the Kähler case\footnote{Let us note that if $f:X\to B$ is proper and smooth, and if $X$ is Kähler, then $B$ is Kähler as well. If $d$ is the relative dimension of $f$ and $\omega$ a Kähler form on $X$, the fibrewise integration of $\omega^{d+1}$ provides us with a Kähler form on $B$.}.
\begin{prop}\label{prop:kahler implique c torsion}
Let $f:X\to B$ be a smooth family of tori inducing $\cH$. If $X$ is Kähler, the class $\cc(\eta(f))$ is torsion in $\coh^2(B,\HH_\ZZ)$.
\end{prop}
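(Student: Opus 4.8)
The plan is to reduce everything to real coefficients and then to recognize $\cc(\eta(f))$ as a Leray differential that is forced to vanish by the Kähler form. Since $X$ is compact and $f$ is proper and surjective, $B$ is compact, so $\coh^2(B,\HH_\ZZ)$ is finitely generated; hence a class is torsion precisely when it dies in $\coh^2(B,\HH_\RR)$. It therefore suffices to prove that $\cc(\eta(f))$ vanishes after $\otimes\,\RR$, which is where Hodge theory becomes available.

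First I would reinterpret $\cc(\eta(f))$ through the intrinsic description of Remark \ref{rem:definition intrinseque eta}. There $\eta(f)=\delta_f(1)$ is the image of the fundamental class of the fibres $1\in\coh^0(B,R^{2g}f_*\ZZ)=\coh^0(B,\ZZ_B)$ under the connecting map of (\ref{eq:extension Deligne}), while $\cc$ is the connecting map of (\ref{eq:suite exacte sections}). Composing the two and using the identification $\HH_\ZZ\simeq R^{2g-1}f_*\ZZ$ recalled above, one sees that $\cc(\eta(f))\in\coh^2(B,R^{2g-1}f_*\ZZ)$ is exactly the value on the fibre class of the second differential
$$d_2:\coh^0(B,R^{2g}f_*\ZZ)\To\coh^2(B,R^{2g-1}f_*\ZZ)$$
of the Leray spectral sequence of $f$, the two spliced extensions being the relevant piece of the truncation tower of $\RR f_*\ZZ$. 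Thus it is enough to show that the fibre class survives to $E_\infty^{0,2g}$ after $\otimes\,\RR$, i.e. that it lies in the image of the edge map $\coh^{2g}(X,\RR)\To\coh^0(B,R^{2g}f_*\RR)$ given by restriction to the fibres.

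This is where the Kähler hypothesis enters. Let $\omega$ be a Kähler form on $X$ and recall $g=\dim(f)$. On each fibre $X_b$ the restriction $[\omega]^g|_{X_b}$ equals $q_\omega(b)^{g}\in\coh^{2g}(X_b,\RR)$, where $q_\omega$ is the polarization induced by $\omega$; as $q_\omega$ is a flat section and $R^{2g}f_*\RR\simeq\RR_B$ is the constant (orientation) sheaf, $q_\omega^{g}$ is a constant positive multiple $c$ of the generator $1$. Consequently $\tfrac1c[\omega]^g\in\coh^{2g}(X,\RR)$ restricts to $1$ on every fibre, so its image under the edge map is exactly $1\in\coh^0(B,R^{2g}f_*\RR)$. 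Hence $1$ lies in $E_\infty^{0,2g}\otimes\RR$, all outgoing Leray differentials vanish on it, and in particular $d_2(1)\otimes\RR=\cc(\eta(f))\otimes\RR=0$, which yields the torsion claim.

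The step I expect to be the main obstacle is the first reduction, namely the clean identification of the composite connecting map $\cc\circ\delta_f$ with the Leray differential $d_2$ evaluated on the fibre class; the remaining ingredients are the standard fact that the image of the edge map coincides with $E_\infty^{0,2g}$, together with the elementary observation that the top power of the Kähler form trivializes, up to the constant $c$, the fibrewise fundamental class on a torus. One should also pause to confirm that $c$ is genuinely constant (equivalently that the flat section $q_\omega$ has constant fibrewise volume), which is immediate from flatness of $q_\omega$ and connectedness of $B$.
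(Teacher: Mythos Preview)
Your argument is correct and shares its backbone with the paper's proof: both identify $\cc(\eta(f))$ with the Leray differential $d_2(1)$ via the commutative diagram linking the connecting maps of (\ref{eq:extension Deligne}) and (\ref{eq:suite exacte sections}), and both conclude by killing this differential over $\RR$. The paper does the last step by invoking Deligne's theorem that the Leray spectral sequence of a K\"ahler morphism degenerates at $E_2$ over $\RR$, whereas you bypass this black box and argue directly that the fibre class $1\in E_2^{0,2g}$ is a permanent cycle because $\tfrac{1}{c}[\omega]^g\in\coh^{2g}(X,\RR)$ hits it under the edge map. Your route is more elementary and uses only what is strictly needed here (survival of the top fibre class, not full degeneration); the paper's route is shorter to write and places the result in its natural general context. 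A small cosmetic point: the detour through $q_\omega^{\,g}$ is unnecessary, since the restriction of $[\omega]^g$ to the fibres is automatically a locally constant section of $R^{2g}f_*\RR\simeq\RR_B$, hence a positive constant on the connected base.
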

\begin{proof}
Using the description of the class $\eta(f)$ given in Remark \ref{rem:definition intrinseque eta}, we readily infer that there is a commutative diagram
\begin{equation}\label{eq:diag Leray}
\xymatrix{\coh^0(B,\ZZ_B=R^{2g}f_*\ZZ_X)\ar[rd]_{\delta_f}\ar[r]^{d_2} & \coh^2(B,R^{2g-1}f_*\ZZ_X=\HH_\ZZ) \\
& \coh^1(B,\cJ(\cH))\ar[u]_{\cc}}
\end{equation}
where $d_2$ is the differential appearing in the Leray spectral sequence associated with $f$ and $\ZZ_X$. But it is well known that this spectral sequence degenerates at $E_2$ for a Kähler morphism and when it is computed using real coefficients \cite[Prop. 2.4]{D68} (see also \cite[Th. 16.15, p. 379]{V02}). The diagram (\ref{eq:diag Leray}) is translated into the equality
$$\cc(\eta(f))=\cc(\delta_f(1))=d_2(1)$$
and the vanishing of $d_{2,\RR}$ exactly means that $\cc(\eta(f))$ is torsion.
\end{proof}

\begin{rem}\label{rem:fibration relativement kahler}
Obviously a relative Kähler class is enough to get the same conclusion as above. It is quite surprising that $\cc(\eta)$ being torsion is in fact equivalent to the fibration $J(\cH)^\eta\to B$ being cohomologically Kähler (meaning that there is a class of degree 2 on $J(\cH)^\eta$ whose restriction to the fibres is a Kähler class). This is the content of \cite[Proposition 2.17]{Nak99}.
\end{rem}

\subsection{Equivariant cohomology}\label{subs:G-cohomologie}

In this paragraph, we recall some facts about equivariant cohomology with respect to the action of a (finite) group $\Gamma$. This formalism was also used in the study of elliptic surfaces \cite[\S 13-14]{Kod}. Here is the setting: we consider a finite group $\Gamma$ acting on a complex manifold $B$ and we look at sheaves of abelian groups $\cF$ over $B$ endowed\footnote{This is equivalent to giving an action on the étalé space $\mathbb{F}$ associated with $\cF$ such that the natural projection $\mathbb{F}\to B$ is $\Gamma$-equivariant.} with an action of $\Gamma$ compatible with the one on $B$: it means that for any $\gamma\in \Gamma$, there exists an isomorphism
$$i_\gamma:\gamma_*\cF\stackrel{\sim}{\To}\cF$$
or, even more concretely, for any open subset $U\subset B$, there is an isomorphism
$$i_\gamma:\coh^0(U,\cF)\stackrel{\sim}{\To} \coh^0(\gamma^{-1}(U),\cF).$$
The collection of these isomorphisms has to satisfy the cocycle relation:
$$i_{\gamma g}=i_\gamma\circ (\gamma_* i_g).$$
If $\cF$ is such a $\Gamma$-sheaf, the group $\Gamma$ acts on the space of global sections and we can define the following functor:
$$\mathrm{F}_\Gamma:\left\{\begin{array}{ccc}\mathcal{S}h_\Gamma(B) & \To & \mathcal{A}b\\ \cF & \mapsto & \coh^0(B,\cF)^{\Gamma} \end{array}\right.$$
from the category of $\Gamma$-sheaves (of abelian groups) to the category of abelian groups.

\begin{defi}\label{def:coh equivariant} 
The equivariant cohomology groups of a $\Gamma$-sheaf $\cF$ are defined using the (right) derived functors of $\mathrm{F}_\Gamma$:
$$\coh^i_\Gamma(B,\cF):=R^i\mathrm{F}_\Gamma(\cF).$$
\end{defi}

The functor $\mathrm{F}_\Gamma$ being expressed as the composition of two functors (taking first the global sections and then the invariants under $\Gamma$), the equivariant cohomology groups can be computed using the spectral sequence of a composed functor (see \cite[th. 16.9, p. 371]{V02}).

\begin{prop}\label{prop:suite spectral coh equi}
For any $\Gamma$-sheaf $\cF$, there is a spectral sequence
\begin{equation}\label{eq:sectral seq}
E_2^{p,q}:=\coh^p(\Gamma,\coh^q(B,\cF))\Longrightarrow \coh^{p+q}_\Gamma(B,\cF)
\end{equation}
abutting to the equivariant cohomology of $\cF$.
\end{prop}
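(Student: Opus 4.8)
The plan is to read Proposition~\ref{prop:suite spectral coh equi} as an instance of the Grothendieck spectral sequence attached to a composition of two left exact functors, exactly as the factorization of $\mathrm{F}_\Gamma$ suggests. Write $G:=\coh^0(B,-)\colon \mathcal{S}h_\Gamma(B)\to \ZZ[\Gamma]\text{-}\mathrm{Mod}$ for the functor sending a $\Gamma$-sheaf to its global sections, viewed as a $\ZZ[\Gamma]$-module, and $F:=(-)^\Gamma\colon \ZZ[\Gamma]\text{-}\mathrm{Mod}\to \mathcal{A}b$ for the invariants functor, so that $\mathrm{F}_\Gamma=F\circ G$. Both are left exact, and their derived functors are the expected ones: $R^qG=\coh^q(B,-)$ is ordinary sheaf cohomology endowed with its natural $\Gamma$-action, while $R^pF=\coh^p(\Gamma,-)$ is group cohomology (the derived functor of invariants). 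By Definition~\ref{def:coh equivariant} the derived functors of the composite are $R^n(F\circ G)=\coh^n_\Gamma(B,-)$. Granting the hypotheses of the composed functor spectral sequence, its $E_2$ page is $E_2^{p,q}=R^pF\bigl(R^qG(\cF)\bigr)=\coh^p(\Gamma,\coh^q(B,\cF))$ and it abuts to $\coh^{p+q}_\Gamma(B,\cF)$, which is precisely~(\ref{eq:sectral seq}).

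To legitimate this I must first supply enough injectives and then check the single non-formal hypothesis of the theorem, namely that $G$ carries injective objects of $\mathcal{S}h_\Gamma(B)$ to $F$-acyclic $\Gamma$-modules. Enough injectives in $\mathcal{S}h_\Gamma(B)$ come from the exactness of the forgetful functor $u\colon \mathcal{S}h_\Gamma(B)\to \mathcal{S}h(B)$: since $\Gamma$ is finite, $u$ admits the exact right adjoint $\mathcal{G}\mapsto \prod_{\gamma\in\Gamma}\gamma_*\mathcal{G}$ (coinduction, a \emph{finite} product precisely because $\Gamma$ is finite), and embedding $u\cF$ into an injective sheaf and applying this adjoint produces an embedding of $\cF$ into an injective $\Gamma$-sheaf.

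The heart of the argument is the acyclicity, and the cleanest route is to observe that $G=\coh^0(B,-)$ itself preserves injectives because it has an exact left adjoint. Writing $\pi\colon B\to \mathrm{pt}$ for the ($\Gamma$-equivariant) constant map, the constant $\Gamma$-sheaf functor $M\mapsto \pi^*M$ is exact and satisfies the equivariant adjunction $\mathrm{Hom}_{\mathcal{S}h_\Gamma(B)}(\pi^*M,\cF)\simeq \mathrm{Hom}_{\ZZ[\Gamma]\text{-}\mathrm{Mod}}(M,\coh^0(B,\cF))$; hence $\coh^0(B,\mathcal{I})$ is an injective $\ZZ[\Gamma]$-module whenever $\mathcal{I}$ is an injective $\Gamma$-sheaf, and injective modules are $F$-acyclic, so the hypothesis holds. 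The same circle of ideas also yields the identification $R^qG=\coh^q(B,-)$: any injective $\Gamma$-sheaf is a direct summand of a coinduced sheaf $\prod_{\gamma}\gamma_*\mathcal{J}$ with $\mathcal{J}$ injective, whose underlying ordinary sheaf is flasque (a finite product of flasque sheaves), hence $\coh^0$-acyclic; since cohomology commutes with the finite direct sum, the underlying sheaf of an injective $\Gamma$-sheaf has vanishing higher cohomology, which is exactly what is needed to compute $R^qG$ by forgetting the $\Gamma$-structure.

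I expect the finiteness of $\Gamma$ to be the feature that makes every step go through: it is what renders coinduction exact and keeps it right adjoint to $u$, and it is implicitly what singles out group cohomology $\coh^p(\Gamma,-)$ as the relevant derived functor. The only genuinely delicate point is the bookkeeping around the two adjunctions — the exact left adjoint $\pi^*$ to $G$, used for the acyclicity, and the exact right adjoint to the forgetful functor, used for enough injectives. Once these are in place, the composed functor spectral sequence (\emph{cf.} \cite[th. 16.9, p. 371]{V02}) applies verbatim and produces~(\ref{eq:sectral seq}).
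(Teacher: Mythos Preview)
Your proposal is correct and follows exactly the approach indicated in the paper: the paper does not give a proof but simply observes that $\mathrm{F}_\Gamma$ is a composition of two left exact functors and invokes the spectral sequence of a composed functor, citing \cite[th.~16.9, p.~371]{V02}. You have supplied the details the paper omits, verifying enough injectives in $\mathcal{S}h_\Gamma(B)$ via coinduction and the acyclicity hypothesis via the exact left adjoint $\pi^*$ to $G$.
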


\begin{rem}\label{rem:cohomology equi coherent}
It is well known that the higher cohomology groups $\coh^p(\Gamma,M)$ are torsion groups for any $\Gamma$-module $M$ and for any $p>0$ when $\Gamma$ is finite (see \cite[Chapter III, Corollary 10.2]{Br}). In particular, if $M$ is in addition a vector space, then the groups $\coh^p(\Gamma,M)$ vanish for $p>0$. It applies for instance when $M=\coh^q(B,\cF)$ for $\cF$ a $\Gamma$-sheaf which is at the same time a coherent sheaf. In this case, the spectral sequence from the preceding proposition degenerates and the equivariant cohomology consists in nothing but taking the invariants:
$$\coh^{i}_\Gamma(B,\cF)=\coh^i(B,\cF)^\Gamma.$$
\end{rem}

\subsection{Smooth family of tori endowed with a group action}\label{subs:fibration equivariante}

We now aim at applying results from the previous paragraph to the following situation: $f:X\to B$ is smooth family of tori endowed with an action of a finite group $\Gamma$. The fibration $f$ is equivariant with respect to both actions of $\Gamma$ on $X$ and $B$. In particular, all the natural objects arising in this situation (the local system $\HH_\ZZ$, the \vsh, the Jacobian fibration as well as its sheaf of sections) are endowed with compatible actions of $\Gamma$. The sequence (\ref{eq:suite exacte sections}) is then an exact sequence of $\Gamma$-sheaves and using Remark \ref{rem:cohomology equi coherent} the long exact sequence reads now as:
\begin{equation}\label{eq:suite longue G-cohomologie}
\coh^1(B,\cE)^\Gamma\stackrel{\exp}{\To} \coh^1_\Gamma(B,\cJ(\cH))\stackrel{\cc_\Gamma}{\To} \coh^2_\Gamma(B,\HH_\ZZ)\dots
\end{equation}
As in Paragraph \ref{subs:jacob}, we can naturally identify a $\Gamma$-equivariant smooth family of tori $f:X\to B$ with its cohomology class
$$\eta_\Gamma(f)\in \coh^1_\Gamma(B,\cJ(\cH)).$$
This can be done as in Kodaira's work \cite[Theorem 14.1]{Kod} or using relative Deligne groups. The exact sequence
$$0\To \cJ(\cH)\To \mathcal{D}_0(X/B) \To \ZZ_B\To 0$$
is indeed an exact sequence of $\Gamma$-sheaves and the connecting morphism
$$\delta_f^\Gamma:\coh^0(B,\ZZ_B)^\Gamma=\ZZ\To \coh^1_\Gamma(B,\cJ(\cH))$$
enables us to define $\eta_\Gamma(f):=\delta_f^\Gamma(1)$ in the group $\coh^1_\Gamma(B,\cJ(\cH))$.\\

We can now turn Propositions \ref{prop:deformation c constant} and \ref{prop:kahler implique c torsion} into $\Gamma$-equivariant statements. The proof of Proposition \ref{prop:deformation c constant} applies verbatim to give the following result.

\begin{prop}\label{prop:def c constant equivariant}
Let us fix a class $\eta$ in $\coh^1_\Gamma(B,\cJ(\cH))$. Then any finite dimensional vector space $V\subset\coh^1(B,\cE)^\Gamma$ appears then as the base space of a smooth $\Gamma$-equivariant deformation
$$\pi:\mathcal{X}^\eta_V\to V\times B.$$
Precisely: the group $\Gamma$ acts on $\mathcal{X}^\eta_V$ and the morphism $\pi$ is equivariant for the trivial action of $\Gamma$ on $V$. If $v\in V$ the smooth family of tori
$$\pi_v:\mathcal{X}^{\eta}_v:=\pi^{-1}(\{v\}\times B)\to B$$
has the following cohomology class
$$\eta_\Gamma(\pi_v)=\exp(v)+\eta\in \coh^1_\Gamma(B,\cJ(\cH)).$$
\end{prop}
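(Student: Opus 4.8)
The plan is to run the construction of Proposition~\ref{prop:deformation c constant} in the category of $\Gamma$-sheaves, checking at each step that the objects and morphisms it produces are genuinely $\Gamma$-equivariant. The bridge between the two settings is Remark~\ref{rem:cohomology equi coherent}: since $\cE$ is coherent one has $\coh^1_\Gamma(B,\cE)=\coh^1(B,\cE)^\Gamma$, so prescribing $V\subset\coh^1(B,\cE)^\Gamma$ is the same as prescribing a space of \emph{equivariant} extension classes. Endow $V$ with the trivial $\Gamma$-action. The tautological extension $0\To\cE\To\cE_V\To\underline{V}\To0$ has extension class $\mathrm{Id}_V\in V^*\otimes\coh^1(B,\cE)$; because $\Gamma$ acts trivially on $V$ and the image of $\mathrm{Id}_V$ lands in $V^*\otimes\coh^1(B,\cE)^\Gamma$, this class is $\Gamma$-invariant. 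Hence $\cE_V$ is naturally a $\Gamma$-sheaf and the displayed sequence is an extension of $\Gamma$-sheaves.

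Next I would transport the $\Gamma$-structure to the total spaces. The $\Gamma$-action on the bundle $\cE_V$ induces one on its total space, and since the inclusion $\HH_\ZZ\hookrightarrow\cE\hookrightarrow\cE_V$ coming from (\ref{eq:suite exacte sections}) is $\Gamma$-equivariant, this action descends to the quotient $\mathcal{Y}$ of the total space of $\cE_V$ by $\HH_\ZZ$. We thus obtain a $\Gamma$-action on $\mathcal{Y}$ for which the projection $\rho:\mathcal{Y}\To V\times B$ is equivariant with respect to the trivial action on $V$. The fibrewise computation of the torsor class is unchanged, but now read inside equivariant cohomology through the map $\exp$ of (\ref{eq:suite longue G-cohomologie}), giving $\eta_\Gamma(\rho_v)=\exp(v)$ for every $v\in V$.

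It remains to incorporate the fixed class $\eta$. Let $J(\cH)^\eta\To B$ be a $\Gamma$-equivariant family realizing $\eta$ (its existence is the equivariant form of the correspondence set up just before the statement, i.e. of Proposition~\ref{prop:smooth family = torsor}), and pull it back along $V\times B\To B$ to obtain a $\Gamma$-equivariant family over $V\times B$ whose torsor class is constantly $\eta$. Gluing this family with $\mathcal{Y}$ over $V\times B$, using the additivity of torsor classes (the equivariant version of Proposition~\ref{prop:smooth family = torsor}), yields the desired family $\pi:\mathcal{X}^\eta_V\To V\times B$. By construction $\Gamma$ acts on $\mathcal{X}^\eta_V$, the projection $\pi$ is equivariant for the trivial action on $V$, and $\eta_\Gamma(\pi_v)=\exp(v)+\eta$.

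The main obstacle — and the only reason the argument is not a literal copy of the non-equivariant one — is the consistent bookkeeping of the $\Gamma$-actions: one must verify that each quotient and each gluing is performed $\Gamma$-equivariantly and that the relevant classes genuinely live in the equivariant groups $\coh^1_\Gamma(B,-)$ rather than merely in the invariants of the ordinary groups. All of this is guaranteed once Remark~\ref{rem:cohomology equi coherent} is invoked to identify $\coh^1_\Gamma(B,\cE)$ with $\coh^1(B,\cE)^\Gamma$ and thereby to promote the extension class $\mathrm{Id}_V$ to an equivariant one; with that identification in hand the remaining steps are formally identical to those of Proposition~\ref{prop:deformation c constant}, which is precisely the sense in which the earlier proof applies verbatim.
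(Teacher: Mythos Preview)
Your proposal is correct and is precisely the approach the paper intends: the paper's own proof consists of the single sentence that the proof of Proposition~\ref{prop:deformation c constant} ``applies verbatim'', and what you have written is a careful unpacking of that sentence, checking the $\Gamma$-equivariance of the tautological extension, of the quotient by $\HH_\ZZ$, and of the gluing step, with Remark~\ref{rem:cohomology equi coherent} supplying the identification $\coh^1_\Gamma(B,\cE)=\coh^1(B,\cE)^\Gamma$.
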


\begin{prop}\label{prop:deformation kahler equivariante}
Let $f:X\to B$ be a $\Gamma$-equivariant smooth family of tori and let us assume that $X$ is Kähler. Then the class
$$\cc_\Gamma(\eta_\Gamma(f))\in \coh^2_\Gamma(B,\HH_\ZZ)$$
is torsion and $f:X\to B$ can be deformed (over $B$) to another smooth family of tori having a multisection and acting on by $\Gamma$.
\end{prop}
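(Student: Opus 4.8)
The plan is to prove the statement in two stages, each being the equivariant counterpart of a result proved above. First I would establish that $\cc_\Gamma(\eta_\Gamma(f))$ is torsion by comparing the equivariant class with its non-equivariant shadow and invoking Proposition~\ref{prop:kahler implique c torsion}; then I would produce the desired deformation by feeding this torsion information into Proposition~\ref{prop:def c constant equivariant}, exactly as in the non-equivariant deformation argument of Proposition~\ref{prop:deformation c constant}.

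For the first stage, the key object is the forgetful morphism $\phi:\coh^i_\Gamma(B,\cF)\To\coh^i(B,\cF)$, which exists naturally for every $\Gamma$-sheaf $\cF$ (it is the edge morphism of the spectral sequence of Proposition~\ref{prop:suite spectral coh equi}, landing in $\coh^i(B,\cF)^\Gamma\subset\coh^i(B,\cF)$). I claim that in degree $2$, for $\cF=\HH_\ZZ$, its kernel is a torsion group. Indeed the kernel is the first step $F^1\coh^2_\Gamma(B,\HH_\ZZ)$ of the abutment filtration, whose graded pieces are subquotients of $\coh^1(\Gamma,\coh^1(B,\HH_\ZZ))$ and $\coh^2(\Gamma,\coh^0(B,\HH_\ZZ))$; both are torsion by Remark~\ref{rem:cohomology equi coherent}, being higher cohomology groups of the finite group $\Gamma$. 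Since $\phi$ is a morphism of cohomological functors it commutes with the connecting morphisms defining the classes and with the maps $\cc_\Gamma$ and $\cc$, so that $\phi(\eta_\Gamma(f))=\eta(f)$ and $\phi(\cc_\Gamma(\eta_\Gamma(f)))=\cc(\eta(f))$. Proposition~\ref{prop:kahler implique c torsion} tells us that $\cc(\eta(f))$ is torsion; hence some multiple of $\cc_\Gamma(\eta_\Gamma(f))$ lands in $\ker\phi$, which is itself torsion, and therefore $\cc_\Gamma(\eta_\Gamma(f))$ is torsion.

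For the second stage I would transcribe the endgame of Proposition~\ref{prop:deformation c constant} to the equivariant setting. Choose $m\ge1$ with $m\cdot\cc_\Gamma(\eta_\Gamma(f))=0$; by exactness of (\ref{eq:suite longue G-cohomologie}) there is $v_0\in\coh^1(B,\cE)^\Gamma$ with $\exp(v_0)=m\cdot\eta_\Gamma(f)$. Applying Proposition~\ref{prop:def c constant equivariant} to the class $\eta_\Gamma(f)$ and the $\Gamma$-fixed line $V=\CC\cdot v_0\subset\coh^1(B,\cE)^\Gamma$ yields a smooth $\Gamma$-equivariant deformation $\pi:\mathcal{X}^{\eta_\Gamma(f)}_V\To V\times B$ whose fibre over $0\in V$ recovers $f$ and whose fibre over $-v_0/m$ has class $\exp(-v_0/m)+\eta_\Gamma(f)$. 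Since $\exp$ is a homomorphism, $m\bigl(\exp(-v_0/m)+\eta_\Gamma(f)\bigr)=-\exp(v_0)+m\cdot\eta_\Gamma(f)=0$, so this class is torsion in $\coh^1_\Gamma(B,\cJ(\cH))$. Pushing it through the forgetful map $\coh^1_\Gamma(B,\cJ(\cH))\To\coh^1(B,\cJ(\cH))$ keeps it torsion, and Proposition~\ref{prop:eta torsion multisection} then provides a multisection for the deformed family, which still carries the $\Gamma$-action by construction.

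The main obstacle is the first stage: everything hinges on having a natural forgetful map with torsion kernel in degree $2$, and on the naturality that identifies $\phi(\cc_\Gamma(\eta_\Gamma(f)))$ with the non-equivariant class $\cc(\eta(f))$. Once this comparison is in place the torsion statement is immediate, and the deformation step is a verbatim equivariant transcription of Proposition~\ref{prop:deformation c constant}; in particular no new degeneration argument is needed, the Kähler degeneration being imported through the non-equivariant Proposition~\ref{prop:kahler implique c torsion}.
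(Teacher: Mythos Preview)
Your proposal is correct and follows essentially the same route as the paper: both arguments use the edge map of the spectral sequence of Proposition~\ref{prop:suite spectral coh equi} to compare $\cc_\Gamma(\eta_\Gamma(f))$ with the non-equivariant $\cc(\eta(f))$, observe that the kernel has torsion graded pieces $E^{1,1}_\infty$ and $E^{2,0}_\infty$ by Remark~\ref{rem:cohomology equi coherent}, invoke Proposition~\ref{prop:kahler implique c torsion}, and then transcribe the deformation argument of Proposition~\ref{prop:deformation c constant} equivariantly via Proposition~\ref{prop:def c constant equivariant}, finishing with the forgetful map $\coh^1_\Gamma(B,\cJ(\cH))\to\coh^1(B,\cJ(\cH))^\Gamma$ and Proposition~\ref{prop:eta torsion multisection}. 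Your write-up is slightly more explicit about the torsion computation at the end, but there is no substantive difference.
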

\begin{proof}
Since the $E^{0,2}_\infty$ coming from the spectral sequence (\ref{eq:sectral seq}) is a subgroup of $E^{0,2}_2$, we have a natural morphism:
$$\coh^2_\Gamma(B,\HH_\ZZ)\stackrel{\pi^{0,2}}{\To} \coh^2(B,\HH_\ZZ)^\Gamma.$$
The following relation is clear:
$$\pi^{0,2}(\cc_\Gamma(\eta_\Gamma(f)))=\cc(\eta(f))$$
and consists in ignoring the $\Gamma$-action. Now we can use Proposition \ref{prop:kahler implique c torsion} to infer that $\pi^{0,2}(\cc_\Gamma(\eta_\Gamma(f)))$ is torsion. Finally the kernel of $\pi^{0,2}$ is an extension of $E^{2,0}_\infty$ by $E^{1,1}_\infty$ and these groups are torsion according to Remark \ref{rem:cohomology equi coherent}. It is enough to conclude that $\cc_\Gamma(\eta_\Gamma(f))$ is a torsion class in $\coh^2_\Gamma(B,\HH_\ZZ)$.

Since $\cc_\Gamma(\eta_\Gamma(f))$ is torsion, we can mimic the proof of Proposition \ref{prop:deformation c constant}: it produces a deformation
$$\mathcal{X}\To \CC\times B\To \CC$$
endowed with an action of $\Gamma$, the group acting fibrewise over $\CC$. Moreover there is a point in the base space $t\in \CC$ such that
$$\eta_\Gamma(\mathcal{X}_t\to B)\in \coh^1_\Gamma(B,\cJ(\cH))$$
is torsion and it implies that $\mathcal{X}_t\to B$ has a multisection (look at the natural projection $\coh^1_\Gamma(B,\cJ(\cH))\To \coh^1(B,\cJ(\cH))^\Gamma$).
\end{proof}

\section{From Kähler fibrations to projective ones}\label{sec:Kodaira pb}
\subsection{Deforming the \vsh}\label{subs:def vsh}

In this section we show how to deform a smooth family of tori once a deformation of the \vsh~is fixed. Let us make this more precise. We consider $f:X\to B$ a smooth family of tori between compact Kähler manifolds and as before we denote by $\cH$ the \vsh~induced on the local system $\HH_\ZZ$. We aim at considering small deformation of $\cH$ in the following sense.
\begin{defi}\label{defi:family of vsh}
A small deformation of $\cH$ is a \vsh~$\cH_U$ on $\HH_\ZZ$ seen as a local system on $U\times B$ where $U$ is a polydisk around $o\in U$ and such that the restriction of $\cH_U$ to $\{ o\}\times B\simeq B$ is the given $\cH$. We shall denote by $\cE_U$ the holomorphic vector bundle $\cH_U/\cH_U^{1,0}$.
\end{defi}

We will make use of the following lemma in the sequel.
\begin{lem}\label{lem:surjectivite Hodge}
Let $\mathbb{V}_\RR$ be a local system underlying a \vsh~$\mathcal{V}$ of weight $w$ defined on a compact Kähler manifold $B$. Then for any $k\ge 0$, the natural map
$$\coh^k(B,\mathbb{V}_\RR)\To \coh^k(B,\mathcal{V}/F^1)$$
induced by $\mathbb{V}_\RR\to \mathbb{V}_\CC\to \mathcal{V}\to\mathcal{V}/F^1$
 is surjective.
\end{lem}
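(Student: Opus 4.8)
The plan is to complexify the statement and then read it off from the Hodge structure carried by the cohomology of the local system. Since $\coh^k(B,\mathbb{V}_\CC)=\coh^k(B,\mathbb{V}_\RR)\otimes_\RR\CC$, my first step is to recall that, $B$ being compact Kähler and the variation $\mathcal{V}$ being polarizable (as it is in the geometric situation to which the lemma is applied), the group $H:=\coh^k(B,\mathbb{V}_\CC)$ carries a natural pure Hodge structure of weight $w+k$; this is the Hodge theory of variations of Hodge structure over a compact Kähler base (see \cite{V02} and the references therein). I will write $H=\bigoplus_{a+b=w+k}H^{a,b}$ for its Hodge decomposition, $\mathcal{F}^\bullet$ for the associated decreasing filtration, and $H_\RR=\coh^k(B,\mathbb{V}_\RR)$ for its real structure. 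The map to be studied is then the composite $H_\RR\hookrightarrow H\to\coh^k(B,\mathcal{V}/F^1)$.

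The heart of the argument is to identify the target with a Hodge-theoretic quotient of $H$. Resolving $\mathbb{V}_\CC$ by its holomorphic de Rham complex $(\mathcal{V}\otimes\Omega^\bullet_B,\nabla)$ and filtering the latter by the Hodge filtration $F^p(\mathcal{V}\otimes\Omega^j_B)=F^{p-j}\mathcal{V}\otimes\Omega^j_B$ (a subcomplex, by Griffiths transversality), one obtains a quotient map of complexes onto $\mathcal{V}/F^1$ placed in degree $0$, which induces precisely the arrow $H\to\coh^k(B,\mathcal{V}/F^1)$ above. I would then invoke the degeneration at $E_1$ of the Hodge--de Rham spectral sequence of the variation over a compact Kähler base: it realizes $\mathcal{F}^1$ as the image of $\coh^k\bigl(B,F^1(\mathcal{V}\otimes\Omega^\bullet_B)\bigr)$ in $H$ and, by strictness, identifies the above arrow with the projection $H\twoheadrightarrow H/\mathcal{F}^1$ onto the top quotient of the Hodge filtration, yielding $\coh^k(B,\mathcal{V}/F^1)\simeq H/\mathcal{F}^1$. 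I expect this identification to be the main obstacle: it is exactly where the compact Kähler hypothesis and the degeneration/strictness are used, and where one must check that the quotient complex $(\mathcal{V}\otimes\Omega^\bullet_B)/F^1$ contributes, for the top piece, only through $\mathcal{V}/F^1$ in degree $0$ — the point at which the non-negativity of the Hodge types of $\mathcal{V}$ enters.

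Granting the identification $\coh^k(B,\mathcal{V}/F^1)\simeq H/\mathcal{F}^1$, the conclusion is pure linear algebra of Hodge structures. Surjectivity of $H_\RR\to H/\mathcal{F}^1$ amounts to the equality $H_\RR+\mathcal{F}^1=H$. For any $w\in\overline{\mathcal{F}^1}$ one has $w+\bar w\in H_\RR$ and $\bar w\in\mathcal{F}^1$, so that $w=(w+\bar w)-\bar w\in H_\RR+\mathcal{F}^1$; hence $H_\RR+\mathcal{F}^1\supseteq\mathcal{F}^1+\overline{\mathcal{F}^1}$. Now $\mathcal{F}^1+\overline{\mathcal{F}^1}$ is the sum of those $H^{a,b}$ with $a\ge1$ together with those with $b\ge1$, and since every summand satisfies $a+b=w+k\ge1$ the two conditions cover all bidegrees; therefore $\mathcal{F}^1+\overline{\mathcal{F}^1}=H$ and the map is onto. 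The only place where a positivity is needed is precisely this last count: the weight $w+k\ge1$ of $H$ is what forces the top and bottom Hodge ranges to be disjoint, and this is automatic in the weight-one situation $\mathcal{E}=\mathcal{H}/\mathcal{H}^{1,0}$ of Definition~\ref{defi:family of vsh} to which the lemma is applied.
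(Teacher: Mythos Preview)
Your approach is essentially the paper's: endow $H=\coh^k(B,\mathbb{V}_\CC)$ with Deligne's Hodge structure of weight $k+w$, identify $\coh^k(B,\mathcal{V}/F^1)$ with the bottom piece of that structure, and conclude by the elementary fact that a real Hodge structure surjects onto its lowest graded piece. The only packaging difference is that the paper reads off the identification $\coh^k(B,\mathcal{V}/F^1)\simeq H^{0,k+w}$ directly from Zucker's explicit Dolbeault description $K^\bullet_{0,k+w}=\mathcal{A}^{0,\bullet}(\mathcal{V}^{0,w})$ of the Hodge components, whereas you reach the equivalent statement $\coh^k(B,\mathcal{V}/F^1)\simeq H/\mathcal{F}^1$ via the de~Rham resolution, $E_1$-degeneration and strictness; under the effectivity hypothesis you flag (Hodge types of $\mathcal{V}$ concentrated in $p\ge0$) these two identifications coincide, and the final linear-algebra step is the same in both proofs.
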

\begin{proof}
The vector space $\coh^k(B,\mathbb{V}_\RR)$ carries a natural Hodge structure of weight $k+w$. This is Deligne's construction explained in \cite[Theorem 2.9]{Zuc} (see also \cite[\S 4.3]{thesedamien}). From the construction itself, the $(P,Q)$ part of this Hodge structure is given by the hypercohomology of a certain complex
$$\coh^k(B,\mathbb{V}_\CC)^{P,Q}=\HH^k(K^\bullet_{P,Q}).$$
It happens that when $(P,Q)=(0,k+w)$ this complex reduces to the Dolbeault complex
$$K^\bullet_{0,k+w}=\mathcal{A}^{0,\bullet}(\mathcal{V}^{0,w})$$
and its hypercohomology is thus the usual one of the holomorphic vector bundle $\mathcal{V}^{0,w}=\mathcal{V}/F^1$. The $(0,k+w)$ part of this Hodge structure is then given by
$$\coh^k(B,\mathbb{V}_\CC)^{0,k+w}\simeq \coh^k(B,\mathcal{V}/F^1).$$
Now it is an easy observation that the real vector space underlying a weight $n$ Hodge structure always surjects onto its $(0,n)$ Hodge component.
\end{proof}

With Definition \ref{defi:family of vsh} at hand, we have the following deformation process.
\begin{prop}\label{prop:deformation induite par vsh}
Let $f:X\to B$ be a smooth family of tori between compact Kähler manifolds inducing $\cH$ and $\cH_U$ a small deformation of $\cH$. Then there exists
$$\mathcal{X}_U \stackrel{\pi_U}{\To} U\times B\stackrel{p_1}{\To} U$$
a smooth family of tori over $U\times B$ inducing $\cH_U$ and such that the family of tori $(\pi_U\circ p_1)^{-1}(o)\to B$ is isomorphic to $X\to B$.
\end{prop}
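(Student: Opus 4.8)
The plan is to realize the sought family as a torsor over the Jacobian fibration $J(\cH_U)\to U\times B$. By Proposition~\ref{prop:smooth family = torsor} applied over the base $U\times B$, such a torsor is classified by a class in $\coh^1(U\times B,\cJ(\cH_U))$, and both the Jacobian fibration and the torsor construction are compatible with restriction to $\{o\}\times B$. Since $\cH_U$ restricts to $\cH$ there, it suffices to produce a class $\eta_U\in\coh^1(U\times B,\cJ(\cH_U))$ restricting to the class $\eta(f)\in\coh^1(B,\cJ(\cH))$ attached to $f$: then $\mathcal{X}_U:=J(\cH_U)^{\eta_U}$ is a smooth family of tori inducing $\cH_U$, and its fibre over $o$ is $J(\cH)^{\eta(f)}\simeq X$. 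Everything thus reduces to a lifting problem across the restriction map $\coh^1(U\times B,\cJ(\cH_U))\to\coh^1(B,\cJ(\cH))$.

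To solve it I would compare the long exact sequences of the extension (\ref{eq:suite exacte sections}) over $U\times B$ and over $B$, joined by restriction to $\{o\}\times B$. Two consequences of $U$ being a polydisk are decisive. First, $\HH_\ZZ$ over $U\times B$ is pulled back from $B$, so homotopy invariance makes $\coh^2(U\times B,\HH_\ZZ)\to\coh^2(B,\HH_\ZZ)$ an isomorphism. Second, the restriction $\coh^1(U\times B,\cE_U)\to\coh^1(B,\cE)$ is surjective: it fits into a commutative square together with the isomorphism $\coh^1(U\times B,\HH_\RR)\xrightarrow{\sim}\coh^1(B,\HH_\RR)$ and the two natural maps $\coh^1(-,\HH_\RR)\to\coh^1(-,\cE)$, the lower of which is onto by Lemma~\ref{lem:surjectivite Hodge} (applied to the weight-one Tate twist of $\cH$, for which the quotient $\mathcal V/F^1$ is precisely $\cE$); surjectivity of the composite then forces surjectivity on the right.

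With these two inputs the lift is built in two steps. The topological invariant $\cc(\eta(f))\in\coh^2(B,\HH_\ZZ)$ is torsion because $X$ is Kähler (Proposition~\ref{prop:kahler implique c torsion}); transporting it through the isomorphism above gives a torsion class $\alpha\in\coh^2(U\times B,\HH_\ZZ)$, whose image in the $\CC$-vector space $\coh^2(U\times B,\cE_U)$ is torsion, hence zero. By exactness $\alpha$ lies in the image of the connecting map $\cc_U$ of (\ref{eq:suite exacte sections}) over $U\times B$, so I may choose $\tilde\eta\in\coh^1(U\times B,\cJ(\cH_U))$ with $\cc_U(\tilde\eta)=\alpha$; its restriction then satisfies $\cc(\tilde\eta|_B)=\cc(\eta(f))$, whence $\tilde\eta|_B-\eta(f)=\exp(e)$ for some $e\in\coh^1(B,\cE)$. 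Lifting $e$ to $\tilde e\in\coh^1(U\times B,\cE_U)$ by the surjectivity above and setting $\eta_U:=\tilde\eta-\exp(\tilde e)$ yields $\eta_U|_B=\eta(f)$, which finishes the construction. The main obstacle is precisely this surjectivity of $\coh^1(U\times B,\cE_U)\to\coh^1(B,\cE)$, resting on the Hodge-theoretic Lemma~\ref{lem:surjectivite Hodge} and on homotopy invariance over the polydisk; the short torsion argument is where the Kähler hypothesis is genuinely used and cannot be avoided.
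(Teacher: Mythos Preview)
Your proof is correct and follows essentially the same route as the paper's: compare the long exact sequences of (\ref{eq:suite exacte sections}) over $U\times B$ and over $B$, use Proposition~\ref{prop:kahler implique c torsion} to kill the image of $\cc(\eta(f))$ in $\coh^2(U\times B,\cE_U)$ and produce a first lift, then correct by a class coming from $\coh^1(U\times B,\cE_U)$ using the surjectivity of the restriction map obtained via Lemma~\ref{lem:surjectivite Hodge} and homotopy invariance over the polydisk. The only cosmetic difference is that you make the isomorphism $\coh^2(U\times B,\HH_\ZZ)\simeq\coh^2(B,\HH_\ZZ)$ explicit, whereas the paper simply records it as an equality in the diagram.
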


\begin{proof}
The \vsh~$\cH_U$ being fixed we can consider the Jacobian fibration
$$\cJ(\cH_U)\to U\times B$$
associated with it and the corresponding long exact sequences:
\begin{equation}\label{eq:extending the deformation}
\xymatrix{\coh^1(U\times B,\cE_U)\ar[r]^{\exp}\ar[d] & \coh^1(U\times B,\cJ(\cH_U))\ar[r]^{\cc}\ar[d] & \coh^2(U\times B,\HH_\ZZ)\ar@{=}[d]\ar[r] &  \coh^2(U\times B,\cE_U)\\
\coh^1(B,\cE)\ar[r]^{\exp} & \coh^1(B,\cJ(\cH))\ar[r]^{\cc}& \coh^2(B,\HH_\ZZ) & }
\end{equation}
The vertical arrows in the preceding diagram are induced by the restriction to $B\simeq \{0\}\times B$. Since $\cc(\eta(f))$ is torsion (Proposition \ref{prop:kahler implique c torsion}), its image in the vector space $\coh^2(U\times B,\cE_U)$ vanishes and thus there exists a class $\eta^1\in \coh^1(U\times B,\cJ(\cH_U))$ whose restriction to $B$ satisfies $\cc(\eta^1_{\mid B})=\cc(\eta(f))$. It means that there exists a class $\alpha\in \coh^1(B,\cE)$ such that $\eta^1_{\mid B}-\eta(f)=\exp(\alpha)$. To conclude it is enough to observe that the first vertical arrow is surjective. To do so, let us consider the following diagram:
$$\xymatrix{\coh^1(U\times B,\mathbb{H}_\RR)\ar@{=}[d]\ar[r] & \coh^1(U\times B,\cE_U)\ar@{->>}[d] \\
\coh^1(B,\mathbb{H}_\RR) \ar@{->>}[r] & \coh^1(B,\cE).
}$$
Since the horizontal bottom arrow is surjective (Lemma \ref{lem:surjectivite Hodge}), it is then clear that the map we are interested in
$$\coh^1(U\times B,\cE_U)\To \coh^1(B,\cE)$$
is surjective as well. Now if $\alpha_U\in \coh^1(U\times B,\cE_U)$ is such that $(\alpha_U)_{\mid B}=\alpha$ then the class $\eta_U:=\eta^1-\exp(\alpha_U)$ restricts to $B$ as the given $\eta(f)$. The class $\eta_U$ corresponds thus to a smooth family of tori over $U\times B$ inducing $\cH_U$ and whose restriction to $B$ is isomorphic to the fibration $f:X\to B$ we started with.
\end{proof}

\begin{rem}\label{rem:def G-equivariant vsh}
The last proposition holds also in the equivariant setting (we wrote down the proof without a group acting to keep the notation readable). It is enough to use equivariant cohomology and it gives the following conclusion (let us recall that a $\Gamma$-\vsh~is a \vsh~such that the underlying local system is endowed with an action of $\Gamma$, the Hodge filtration being compatible with this action).
\end{rem}

\begin{prop}\label{prop:def G-equivariant vsh}
Let $f:X\to B$ be a smooth family of tori (between compact K\"ahler manifolds) equivariant under the action of a finite group $\Gamma$ on both $X$ and $B$. Let us moreover consider $\cH_U$ a small deformation of $\cH$ which at the same time a $\Gamma$-\vsh~for the action on $U\times B$ given by the trivial one on $U$. There exists then
$$\mathcal{X}_U \stackrel{\pi_U}{\To} U\times B\stackrel{p_1}{\To} U$$
a smooth family of tori over $U\times B$ as in Proposition \ref{prop:deformation induite par vsh} such that $\Gamma$ is acting on $\mathcal{X}_U$ and $\pi_U$ is equivariant for the trivial action on $U$. 
\end{prop}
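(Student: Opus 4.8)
The plan is to transcribe the proof of Proposition \ref{prop:deformation induite par vsh} into the equivariant framework of Paragraph \ref{subs:fibration equivariante}, systematically replacing the ordinary cohomology groups $\coh^i(\cdot,\cdot)$ by their equivariant counterparts $\coh^i_\Gamma(\cdot,\cdot)$, the class $\eta(f)$ by $\eta_\Gamma(f)$, and the map $\cc$ by $\cc_\Gamma$. Since $\cH_U$ is a $\Gamma$-\vsh~for the trivial action on $U$, the Jacobian fibration $\cJ(\cH_U)\to U\times B$ and the sequence (\ref{eq:suite exacte sections}) carry compatible $\Gamma$-actions, so the horizontal long exact sequences of diagram (\ref{eq:extending the deformation}) make sense verbatim in equivariant cohomology, the vertical arrows being the restriction to $\{o\}\times B\simeq B$.

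First I would produce the equivariant analogue of the class $\eta^1$. By Proposition \ref{prop:deformation kahler equivariante} the class $\cc_\Gamma(\eta_\Gamma(f))\in\coh^2_\Gamma(B,\HH_\ZZ)$ is torsion. The bundle $\cE_U$ being coherent, Remark \ref{rem:cohomology equi coherent} identifies $\coh^2_\Gamma(U\times B,\cE_U)$ with the invariants $\coh^2(U\times B,\cE_U)^\Gamma$, hence with a $\CC$-vector space; the torsion class therefore dies in it, and exactness of the top equivariant row yields $\eta^1\in\coh^1_\Gamma(U\times B,\cJ(\cH_U))$ with $\cc_\Gamma(\eta^1)$ equal to $\cc_\Gamma(\eta_\Gamma(f))$ after the identification $\coh^2_\Gamma(U\times B,\HH_\ZZ)\simeq\coh^2_\Gamma(B,\HH_\ZZ)$. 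This last identification is the equivariant homotopy invariance for the local system $\HH_\ZZ$: as $\Gamma$ acts trivially on the contractible factor $U$, one has $\coh^q(U\times B,\HH_\ZZ)=\coh^q(B,\HH_\ZZ)$ as $\Gamma$-modules for every $q$, so the two spectral sequences (\ref{eq:sectral seq}) for $U\times B$ and for $B$ coincide and the restriction is an isomorphism on the abutments. Commutativity of the diagram then gives $\cc_\Gamma(\eta^1_{\mid B})=\cc_\Gamma(\eta_\Gamma(f))$, whence $\eta^1_{\mid B}-\eta_\Gamma(f)=\exp(\alpha)$ for some $\alpha\in\coh^1_\Gamma(B,\cE)$.

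The crux is then the surjectivity of the equivariant restriction $\coh^1_\Gamma(U\times B,\cE_U)\to\coh^1_\Gamma(B,\cE)$, which lets me lift $\alpha$ to a class $\alpha_U$ and set $\eta_U:=\eta^1-\exp(\alpha_U)$; this $\eta_U$ restricts to $\eta_\Gamma(f)$ and, through the torsor description of Paragraph \ref{subs:fibration equivariante}, corresponds to the sought $\Gamma$-equivariant family $\mathcal{X}_U\to U\times B$ with central fibre $X\to B$. To obtain the surjectivity I would run the square used in the non-equivariant proof in equivariant cohomology: the left vertical map $\coh^1_\Gamma(U\times B,\HH_\RR)\to\coh^1_\Gamma(B,\HH_\RR)$ is again an isomorphism by homotopy invariance, so it suffices that the bottom map $\coh^1_\Gamma(B,\HH_\RR)\to\coh^1_\Gamma(B,\cE)$ be surjective. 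Here the cohomology groups of $\HH_\RR$ and of the coherent bundle $\cE$ are respectively $\RR$- and $\CC$-vector spaces, so by the vanishing of higher group cohomology recorded in Remark \ref{rem:cohomology equi coherent} both equivariant groups reduce to $\Gamma$-invariants, and the map becomes the invariant part of the surjection $\coh^1(B,\HH_\RR)\twoheadrightarrow\coh^1(B,\cE)$ furnished by Lemma \ref{lem:surjectivite Hodge}. This surjection is $\Gamma$-equivariant, and since $\Gamma$ is finite the functor of invariants over $\QQ$ (or $\RR$) is exact by Maschke's averaging, so surjectivity is preserved. That Maschke step, together with the use of real rather than integral coefficients, is exactly the point of the argument requiring care; everything else is a formal transcription of Proposition \ref{prop:deformation induite par vsh} into equivariant cohomology.
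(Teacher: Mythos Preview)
Your proposal is correct and follows precisely the approach indicated by the paper: the paper's own ``proof'' is the one-sentence Remark~\ref{rem:def G-equivariant vsh}, stating that it suffices to rerun the argument of Proposition~\ref{prop:deformation induite par vsh} with equivariant cohomology in place of ordinary cohomology. You have carried this out in full detail, invoking Proposition~\ref{prop:deformation kahler equivariante} for the torsion of $\cc_\Gamma(\eta_\Gamma(f))$, Remark~\ref{rem:cohomology equi coherent} for the reduction to invariants on coherent (and real-coefficient) cohomology, and the exactness of $(-)^\Gamma$ on vector spaces to preserve the surjectivity from Lemma~\ref{lem:surjectivite Hodge}; this is exactly the intended transcription, supplemented with the justifications the paper leaves implicit.
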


\subsection{Buchdahl's criterion for families of tori}\label{subs:buchdahl}
We now recall the relative Buchdahl criterion we obtained in \cite[Th. 1.1]{CCE2} and explain how to make it equivariant (adapting Graf's arguments from \cite[\S 9]{Graf}).
 
\begin{prop}\label{prop:CCE2}
Let $\cH$ be a weight $-1$ and rank $2g$ \vsh~over $B$ (whose underlying local system is denoted $\HH_\ZZ$). Let us assume moreover that $\cH$ admits a real polarization $q$. Then there exists a small deformation $\cH_V$ of $\cH$ such that the set
$$\left\{v\in V\mid \cH_{V,v}\,\mathrm{admits\, a\, rational\, polarization} \right\}$$
is dense near $\mathrm{o}\in V$: its closure contains an open neighbourhood of \emph{o} (the notation $\cH_{V,v}$ is simply the restriction of $\cH_V$ to $\set{v}\times B\simeq B$).
\end{prop}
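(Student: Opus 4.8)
The plan is to reduce the existence of a rational polarization to a density statement on a single finite-dimensional rational Hodge structure, and then to produce the deformation by arranging a period map to be submersive at the base point.

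First I would record the linear algebra of a polarization at the level of the local system $\wedge^2\HH^*$. A flat real skew form $q\in W_\RR:=\coh^0(B,\wedge^2\HH^*_\RR)=(\wedge^2\HH^*_\RR)^{\pi_1(B)}$ satisfies the first Hodge--Riemann relation $q(\cF,\cF)=0$ exactly when it is pointwise of type $(1,1)$; by Deligne's theorem of the fixed part, $W$ carries a weight-$2$ Hodge structure (the fixed part) compatible with the Hodge structure on every fibre, so this is equivalent to $q\in W^{1,1}_\RR$. The second relation together with nondegeneracy is an \emph{open} condition, so the real polarizations form an open cone $\mathcal{P}\subset W^{1,1}_\RR$, nonempty by hypothesis and containing the given $q_0$. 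A rational polarization is an element of $\mathcal{P}\cap W_\QQ$, and the only obstruction to its existence is that $W^{1,1}_\RR$ may be in general position with respect to the rational structure. Thus it suffices to deform $\cH$ so that $W^{1,1}_\RR$ meets $W_\QQ$ near $q_0$ for a dense set of parameters.

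Next I would set up an incidence variety. A small deformation $\cH_V$ keeps $\HH_\ZZ$ fixed and moves only the Hodge filtration $\cF_v\subset\mathcal{V}$, hence moves the subspace $W^{2,0}(\cF_v)\subset W_\CC$, and for real $q$ being $(1,1)$ at $v$ reads $q^{2,0}(\cF_v)=0$. Consider $I:=\{(v,q)\in V\times W_\RR\mid q^{2,0}(\cF_v)=0\}$ near $(o,q_0)$, with its two projections $\pi_V\colon I\to V$ and $p\colon I\to W_\RR$. The target set $\{v\mid \cH_{V,v}\text{ admits a rational polarization}\}$ contains $\pi_V\big(p^{-1}(W_\QQ)\big)$, since positivity and nondegeneracy persist by openness once $q$ is rational, real, $(1,1)$ and close to $q_0$. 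It therefore suffices to make $p$ a submersion at $(o,q_0)$: then $p^{-1}(W_\QQ)$ is dense in $I$, and as $\pi_V$ is a submersion, hence open, its image is dense in $V$ near $o$. Differentiating, $p$ is submersive once two maps are surjective: the fibrewise map $q\mapsto q^{2,0}(\cF_v)$, which is $W_\RR\twoheadrightarrow W^{2,0}(\cF_v)$, and the period differential $\Theta=\partial q_0^{2,0}\colon T_oV\to W^{2,0}$. The first surjection is exactly Lemma~\ref{lem:surjectivite Hodge} for $\VV=\wedge^2\HH^*$ and $k=0$ (the real points surject onto the extreme Hodge piece); this is the Hodge-theoretic heart, and is where the compact Kähler hypothesis enters.

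It remains to choose the deformation so that $\Theta$ is onto $W^{2,0}$, and this is where I expect the main difficulty to lie. Computing along graphs $\cF_v=\mathrm{graph}(\kappa_v)$ gives $\Theta(\kappa)=\mathrm{skew}\,(x,y)\mapsto q_0(\kappa x,y)$, the skew part of the associated form on $\cF$. Because $q_0$ is a polarization, $\cF$ is $q_0$-Lagrangian and $q_0$ induces a holomorphic isomorphism $\cE=\mathcal{V}/\cF\xrightarrow{\sim}\cF^*$; hence for every $\alpha\in W^{2,0}$, which is a flat and therefore holomorphic section of $\wedge^2\cF^*$, the class $\kappa_\alpha:=q_0^{-1}\alpha\in\coh^0(B,\mathcal{H}om(\cF,\cE))$ satisfies $\Theta(\kappa_\alpha)=\alpha$. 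Taking a basis $\alpha_1,\dots,\alpha_N$ of $W^{2,0}$ and letting $V$ be a polydisk along the directions $\kappa_{\alpha_1},\dots,\kappa_{\alpha_N}$ makes $\Theta$ surjective. The delicate points to verify here are that the fixed-part Hodge structure on $W$ behaves as the theorem of the fixed part predicts and that these first-order directions integrate to an honest family of Hodge structures $\cH_V$ deforming $\cH$ (plausible since in weight one Griffiths transversality is vacuous and the graphs $\mathrm{graph}(\kappa_v)$ define holomorphic subbundles for small $v$, the Hodge decomposition persisting by openness). Once both surjectivities hold, $p$ is a submersion and the passage from real to rational polarizations is the soft density argument via open maps and the density of $W_\QQ$ in $W_\RR$, completing the proof.
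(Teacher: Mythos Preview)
Your argument is correct and follows essentially the same strategy as the paper's: both reduce to the weight-$2$ Hodge structure on $W=\coh^0(B,\wedge^2\HH^{\scriptscriptstyle\vee})$, both establish that the differential of the period map surjects onto the extreme Hodge piece (via the polarization identifying $\cE\simeq\cF^*$, equivalently $q$ giving an isomorphism $\HH\to\HH^{\scriptscriptstyle\vee}$), and both conclude by a density argument for rational classes. The paper packages the last step as \cite[Proposition~17.20]{V02}, which is exactly your incidence-variety submersion argument; and instead of choosing $V$ by hand along the directions $\kappa_{\alpha_i}$, the paper takes $V$ to be a neighbourhood of $I$ in the conjugation orbit $G/G^\circ$ inside $A_\RR=\coh^0(B,\mathrm{End}(\HH_\RR))$, identifying $T_{V,o}$ with the anti-commuting part $A^{-I}$---a more canonical construction, with the payoff that the equivariant Corollary~\ref{cor:def vsh equivariant} follows immediately by restricting to $V^\Gamma$.

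One small correction: you invoke Deligne's theorem of the fixed part and say ``this is where the compact K\"ahler hypothesis enters,'' but the statement makes no such assumption on $B$. As the paper notes, in this weight-one situation the complex structure $I$ commutes with the monodromy, so the induced Hodge decompositions on tensor powers are monodromy-stable and pass to invariants without any K\"ahler input; the surjection $W_\RR\twoheadrightarrow W^{2,0}$ you need is then just the elementary fact that the real points of a pure Hodge structure surject onto an extreme Hodge component.
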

Since we need to check that the construction of \cite{CCE2} can be made in an equivariant framework, we recall how the proof goes.
\begin{proof}[Sketch of proof]
Let us consider the $\RR$-algebra
$$A_\RR:=\coh^0(B,\mathrm{End}(\HH_\RR)).$$
The \vsh~$\cH$ is nothing but an element $I\in A_\RR$ such that $I^2=-1$ and as such determines a complex structure on $A_\RR$. This structure can be enriched as follows. Let us consider the decomposition
$$A_\RR=A^I\oplus A^{-I}$$
where $A^I$ (\emph{resp.} $A^{-I}$) consists in elements of $A_\RR$ commuting with $I$ (\emph{resp.} anti-commuting with $I$). Multiplication by $I$ respects the decomposition and thus induces a complex structure on each piece. If we let
$$A^{-I}_\CC:=A^{1,-1}_\CC\oplus A^{-1,1}_\CC,$$
we then have a weight 0 Hodge structure on $A_\RR$ whose $(0,0)$ part is just $A^I$.

Let $G$ be the group of invertible elements of $A_\RR$: it acts on $A_\RR$ by conjugation. The orbit through $I$ is $G/G^\circ$ where $G^\circ$ is the group of invertible elements commuting with $I$. The space $G/G^\circ$ inherits a complex structure from the local diffeomorphism
$$G/G^\circ\To G_\CC/G^\circ_\CC.$$
Let us consider a small neighbourhood $V$ of $o$ the class of the identity in $G/G^\circ$: it is the base of a tautological family of complex structures on $\HH_\RR$, \emph{i.e.} it carries a small deformation $\cH_V$ of $\cH$. Now we can consider the following\footnote{Here a remark is in order. Usually to be able to endow the cohomology of a \vsh~with a Hodge structure, the base manifold needs to be compact Kähler or at least a Zariski open subset of a compact Kähler manifold (in the latter case we end up with a mixed Hodge structure). But in our situation we only have to handle Hodge structures (on the global sections) coming from weight one \vsh. Since the complex structure $I$ commutes with the monodromy of the underlying local system, the Hodge decompositions induced on tensor products are compatible with the action of the monodromy group and these decompositions are preserved when taking the invariants. That is the reason why no assumption is needed on $B$ in our study.} weight 2 \vsh~on $V$: the local system is given by
$$\mathbb{W}_\QQ:=\coh^0(B,\Lambda^2\HH_\QQ^{\scriptscriptstyle{\vee}})$$
and the Hodge structure on $\mathcal{W}_v$ is induced by $\cH_v$ for $v\in V$. Now we aim at applying \cite[Proposition 17.20]{V02} and we first remark that the polarization can be seen as an element $q\in \mathcal{W}_o^{1,1}$. Moreover such an element induces in particular a morphism (of bidegree $(1,1)$) of Hodge structures
$$q\circ\cdot :A_\RR \To \mathcal{W}_{o}$$
which is clearly surjective ($q$ is an isomorphism between $\HH$ and $\HH^{\scriptscriptstyle{\vee}}$). It implies that the following component of the differential of the period map
\begin{equation}\label{eq:nabla Hodge}
\bar{\nabla}_o(q):T_{V,o}=A^{-I}=A_\RR\cap\left(A_\CC^{1,-1}\oplus A_\CC^{-1,1}\right)\To \mathcal{W}_o^{0,2}
\end{equation}
is surjective. The statement of \cite[Proposition 17.20]{V02} ensures that the set of $v\in V$ such that $\cH_v$ is $\QQ$-polarizable is dense in $V$.
\end{proof}

\noindent From the proof we get the following equivariant version.
\begin{cor}\label{cor:def vsh equivariant}
Let us assume that a finite group $\Gamma$ acts on $B$ and that the \vsh~$\cH$ is a $\Gamma$-\vsh. Then there exists a small deformation $\cH_U$ of $\cH$ on $U\times B$ which is at the same time a $\Gamma$-\vsh~over $U\times B$ for the trivial action on $U$ and such that the set of points $u\in U$ corresponding to $\QQ$-polarizable complex structures is dense in $U$.
\end{cor}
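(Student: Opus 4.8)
The plan is to run the construction from the proof of Proposition \ref{prop:CCE2} \emph{verbatim}, but to replace every space occurring there by its $\Gamma$-invariant part; the fact that $\Gamma$ is finite, so that taking invariants over $\QQ$ or $\RR$ is exact, is what makes this harmless. First I would record the $\Gamma$-equivariant structures at play. Since $\cH$ is a $\Gamma$-\vsh, the local system $\HH_\RR$ carries an action of $\Gamma$, hence so does the $\RR$-algebra $A_\RR=\coh^0(B,\mathrm{End}(\HH_\RR))$, via conjugation; its invariant subalgebra $A_\RR^\Gamma$ consists of the $\Gamma$-equivariant endomorphisms. Because the Hodge filtration is $\Gamma$-compatible, the complex structure $I$ defining $\cH$ commutes with $\Gamma$, so $I\in A_\RR^\Gamma$. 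As in Proposition \ref{prop:CCE2} we start from a real polarization $q$ of $\cH$; averaging it as $\frac{1}{|\Gamma|}\sum_{\gamma\in\Gamma}\gamma^*q$ produces again a real polarization (the Hodge-Riemann conditions cut out a convex cone, and each $\gamma^*q$ satisfies them since $\gamma$ preserves $\cF$) which is now $\Gamma$-invariant, so I may assume $q\in A_\RR^\Gamma$ from the outset.

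Since $I$ is $\Gamma$-invariant, conjugation by $\Gamma$ preserves the splitting $A_\RR=A^I\oplus A^{-I}$, and I would restrict the Buchdahl deformation to the invariant directions. Let $G^\Gamma$ be the group of invertible elements of $A_\RR^\Gamma$, acting on $A_\RR^\Gamma$ by conjugation, and take $U$ a small neighbourhood of the class of the identity in $G^\Gamma/(G^\Gamma)^\circ$, where $(G^\Gamma)^\circ$ denotes the invariant invertibles commuting with $I$. The tautological family over $U$ consists of the complex structures $gIg^{-1}$ with $g\in G^\Gamma$, each of which is again $\Gamma$-invariant; the resulting small deformation $\cH_U$ is therefore automatically a $\Gamma$-\vsh~for the trivial action on $U$, as required. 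Its tangent space at $o$ is the invariant part $T_{U,o}=(A^{-I})^\Gamma=A_\RR^\Gamma\cap(A_\CC^{1,-1}\oplus A_\CC^{-1,1})$.

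It then remains to reproduce the period-map computation on the invariant sub-\vsh. The weight $2$ \vsh~$\mathbb{W}_\QQ=\coh^0(B,\Lambda^2\HH_\QQ^{\scriptscriptstyle{\vee}})$ carries a $\Gamma$-action, and its invariant part $\mathbb{W}_\QQ^\Gamma$ is a sub-\vsh, being a $\Gamma$-invariant rational sub-local-system stable under the Hodge flag (all structures $\cH_u$, $u\in U$, being $\Gamma$-\vsh). The averaged polarization defines a class $q\in(\mathcal{W}_o^{1,1})^\Gamma$, and the morphism $q\circ\cdot:A_\RR\To\mathcal{W}_o$ of the sketch is $\Gamma$-equivariant; taking invariants (exact, as $\Gamma$ is finite and we work in characteristic zero) keeps it surjective, so $q\circ\cdot:A_\RR^\Gamma\To\mathcal{W}_o^\Gamma$ is onto. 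Consequently the relevant component of the differential of the period map
$$\bar\nabla_o(q):T_{U,o}=(A^{-I})^\Gamma\To(\mathcal{W}_o^{0,2})^\Gamma$$
is still surjective. Applying \cite[Proposition 17.20]{V02} to the sub-\vsh~$\mathbb{W}_\QQ^\Gamma$ with the Hodge class $q$ then yields that the set of $u\in U$ admitting a rational class of type $(1,1)$ near $q$ is dense; such a class, being close to the polarization $q$, is itself a (here even $\Gamma$-invariant) rational polarization of $\cH_u$, whence the density of $\QQ$-polarizable points.

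The step I expect to be the main obstacle is checking that passing to $\Gamma$-invariants genuinely commutes with all the constructions used in the sketch: that $\mathbb{W}_\QQ^\Gamma$ is a bona fide polarizable sub-\vsh~to which Voisin's criterion legitimately applies, and that the surjectivity of $\bar\nabla_o(q)$ survives restriction to the invariant tangent space $(A^{-I})^\Gamma$. All of this reduces to the exactness of the invariants functor over $\QQ$, and this is where I would follow Graf's treatment in \cite[\S 9]{Graf} most closely, the remaining steps being the formal repetition of the non-equivariant argument.
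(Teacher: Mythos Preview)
Your proposal is correct and follows essentially the same approach as the paper: average the polarization, restrict the Buchdahl deformation to $\Gamma$-invariant directions so that $T_{U,o}=(A^{-I})^\Gamma$, and then apply \cite[Proposition 17.20]{V02} to the invariant sub-\vsh~$\mathcal{W}^\Gamma$, using exactness of taking $\Gamma$-invariants in characteristic zero to preserve the surjectivity of $\bar\nabla_o(q)$. The only cosmetic difference is that the paper first builds the full deformation $\cH_V$ over $V\subset G/G^\circ$ and then sets $U:=V^\Gamma$, whereas you construct $U$ directly inside $G^\Gamma/(G^\Gamma)^\circ$; since both yield the same tangent space $(A^{-I})^\Gamma$ at $o$, the two descriptions agree locally.
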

\begin{proof}
Let us consider the small deformation $\cH_V$ constructed in the above proof. It is obvious from the construction that $\Gamma$ acts on $V$ and that $\cH_V$ is a $\Gamma$-\vsh~for the diagonal action of $\Gamma$ on $V\times B$. Now let us restrict it to the set $U:=V^\Gamma$ of fixed points of $\Gamma$ in $V$. Since we saw that the space $V$ can be identified with an open neighbourhood of $0\in A_\RR\cap\left(A_\CC^{1,-1}\oplus A_\CC^{-1,1}\right)$ and since $\Gamma$ acts linearly on the latter vector space, we see that $U$ is smooth\footnote{It is a general fact: the set of fixed points $X^\Gamma$ of a finite group acting on a complex manifold is smooth, see \cite{Car}.} near the point $o$. Replacing the polarization $q$ with its average over the group $\Gamma$ we can assume that $q$ is $\Gamma$-invariant. Finally, since we are dealing with vector spaces, taking the invariants under the group $\Gamma$ preserves surjectivity in (\ref{eq:nabla Hodge}):
$$\bar{\nabla}_o(q)^\Gamma:T_{V,o}^\Gamma=T_{U,o}\To \left(\mathcal{W}_o^{0,2}\right)^\Gamma.$$
The use of \cite[Proposition 17.20]{V02} in this invariant context (we apply it to the \vsh~$\mathcal{W}^\Gamma$) shows that we can endow $\cH_U:=(\cH_V)_{\mid U}$ with a $\Gamma$-invariant polarization $q_U$ such that $q_u$ is a rational polarization of $\cH_u$ for a dense set of points $u\in U$.
\end{proof}

\subsection{Proofs of main statements}\label{subs:conclusion}

We are now in position to prove the main statements of this article.
\begin{proof}[Proof of Theorem \ref{th:th principal}]
Let $f:X\to B$ be a smooth family of tori with $X$ compact Kähler and assumed to be equivariant under the action of a finite group $\Gamma$. We denote by $\cH$ the \vsh~induced on the local system $\HH_\ZZ$. We first apply Corollary \ref{cor:def vsh equivariant}: it produces a small deformation $\cH_U$ (over a polydisk $U$) of $\cH$ as a $\Gamma$-\vsh~and such that the \vsh~$\cH_u$ is $\QQ$-polarizable for a dense subset of $U$. Since $X$ is compact Kähler, we can apply Proposition \ref{prop:def G-equivariant vsh}: there exists a smooth family of tori $f_U:\mathcal{X}_U\To U\times B$ inducing $\cH_U$ and such that $\Gamma$ acts equivariantly on $\mathcal{X}_U\To U\times B$ (with the trivial action on $U$). 

Now we need to find the right space of deformation to get the density statement. First we apply Lemma \ref{lem:surjectivite Hodge} to infer that
$$\coh^1(B,\mathbb{H}_\RR)\To \coh^1(B,\cE)$$
is surjective. Since we want to use $\coh^1(B,\cE)^\Gamma$ as a space of deformation for the family $f_U$, we look at the following commutative diagram:
$$\xymatrix{\coh^1(U\times B,\mathbb{H}_\RR)\ar@{=}[d]\ar[r] & \coh^1(U\times B,\cE_U)\ar@{->>}[d] \\
\coh^1(B,\mathbb{H}_\RR) \ar@{->>}[r] & \coh^1(B,\cE).
}$$
We can remark that taking the invariants yields a diagram of the same shape:
$$\xymatrix{\coh^1(U\times B,\mathbb{H}_\RR)^\Gamma\ar@{=}[d]\ar[r] & \coh^1(U\times B,\cE_U)^\Gamma\ar@{->>}[d] \\
\coh^1(B,\mathbb{H}_\RR)^\Gamma \ar@{->>}[r] & \coh^1(B,\cE)^\Gamma.
}$$
Let us consider $V$ the image of $\coh^1(B,\mathbb{H}_\RR)^\Gamma$ in $\coh^1(U\times B,\cE_U)^\Gamma$ and similarly $V_\QQ\subset V$ the image of $\coh^1(B,\mathbb{H}_\QQ)^\Gamma$ in $\coh^1(U\times B,\cE_U)^\Gamma$. Let us remark that the subset $V_\QQ$ is obviously dense in $V$. We can use $V$ in Proposition \ref{prop:def c constant equivariant} to construct a $\Gamma$-equivariant deformation:
$$f_{U,V}:\mathcal{X}_{V,U}\To V\times U\times B$$
such that $\mathcal{X}_{U,0}\To \{0\}\times U\times B$ is the previous $f_U$. Moreover the points of $V_\QQ$ are sent to torsion points in
$$\coh^1_\Gamma(U\times B,\cJ(\cH_U))$$
and thus to smooth families of tori with multisections. Finally if we denote by $U_{alg}$ the set of points $u\in U$ such that $\cH_u$ is $\QQ$-polarizable, the set
$$T_{alg}:=V_\QQ\times U_{alg}\subset T:=V\times U$$
is dense in $T$ and parametrizes families $\mathcal{X}_{v,u}$ having multisections and abelian varieties as fibres.
\end{proof}

Before giving the proof of Corollary \ref{cor:kahler linéaire}, let us state the structure result obtained in \cite{CCE1}. This is the main ingredient in the above mentioned proof.

\begin{thm}\label{th:structure CCE}
Let $X$ be a compact Kähler manifold and $\rho:\pi_1(X)\to \mathrm{GL}_N(\CC)$ be a linear representation. If $H<\pi_1(X)$ is a finite index subgroup such that $\rho(H)$ is torsion free, then the étale cover $Y\to X$ corresponding to $H$ has the following property (up to bimeromorphic transformations): the base of the Shafarevich morphism $sh_\rho:Y\to Sh_\rho(Y):=W$ is such that the Iitaka fibration of $W$ is a smooth family of tori $f:W\to B$ (onto a projective manifold of general type).
\end{thm}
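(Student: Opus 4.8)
This is the main structure theorem of \cite{CCE1}, so rather than reprove it I will outline the strategy by which such a factorization is produced. After replacing $X$ by the étale cover $Y$ --- so that $G:=\overline{\rho(\pi_1(Y))}^{\,\mathrm{Zar}}\subset\gl_N(\CC)$ has torsion-free image --- my first move would be to split $\rho$ according to the algebraic structure of $G$. Using a Levi decomposition $G=G^{\mathrm{red}}\ltimes R_u(G)$, one separates the reductive quotient $\rho^{\mathrm{red}}:\pi_1(Y)\to G^{\mathrm{red}}$ from the solvable (unipotent) part. These two pieces are responsible, respectively, for the general type base $B$ and for the torus fibres of the sought fibration, so the whole point is to treat them separately and then reassemble.

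Next I would handle the reductive part by non-abelian Hodge theory. By Corlette's theorem $\rho^{\mathrm{red}}$ is the monodromy of a harmonic bundle, and after a Simpson-type deformation to a fixed point of the $\CC^\ast$-action it underlies a (possibly complex) \vsh. The associated period map, together with the factorization results of Eyssidieux and Katzarkov--Zuo on the Shafarevich conjecture for reductive representations, then yields a holomorphic Shafarevich morphism for $\rho^{\mathrm{red}}$: up to bimeromorphic modification one obtains a fibration onto a normal projective base, and the hyperbolicity and positivity of period domains (bigness of the Hodge-theoretic classes pulled back from the quotient) force that base to be of general type. This is where the projective general-type manifold $B$ comes from.

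The complementary virtually solvable part of $\rho$ would account for the fibres of $sh_\rho$ over $B$. Over such a fibre $\rho^{\mathrm{red}}$ is trivial, so only an abelian representation survives; for a compact Kähler manifold an abelian representation factors through the Albanese torus, and this is precisely the mechanism that realises the Shafarevich-trivial directions as a relative family of complex tori (a relative Albanese or intermediate Jacobian over $B$). Assembling the reductive period map with this relative torus construction and then passing to the Iitaka fibration of the resulting $W=Sh_\rho(Y)$, one reads off the advertised shape: fibres of Kodaira dimension zero (the tori) over a base of maximal Kodaira dimension (general type), so that $\kod{W}=\dim B$ and $W\to B$ is the Iitaka fibration.

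The genuinely hard point --- and the reason the full argument fills \cite{CCE1} --- is to upgrade this qualitative picture into a \emph{smooth} family of tori after a controlled bimeromorphic modification. I would expect the main obstacle to be ruling out degeneration of the torus fibres over $B$, that is, arranging that the relative Albanese fibration is an honest proper submersion. This requires a careful regularity and semicontinuity analysis of the period map near its discriminant, combined with Campana's orbifold and special-variety formalism to certify that no residual positivity is hidden in the fibre direction; only after these normalisations does $f:W\to B$ become the smooth family of tori asserted in the statement.
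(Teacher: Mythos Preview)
The paper does not prove this statement at all: Theorem~\ref{th:structure CCE} is simply quoted from \cite{CCE1}, with the sentence ``We refer to \emph{loc.\ cit.} for the relevant notions'' standing in for any argument. Your opening line correctly identifies this, and the outline you give of the \cite{CCE1} strategy --- splitting $\rho$ into reductive and unipotent parts, using non-abelian Hodge theory and period maps for the reductive piece to produce the general-type base, and relative Albanese-type constructions for the torus fibres --- is a faithful high-level summary of how that paper proceeds. So there is nothing to compare against here: you have matched the paper's treatment (a bare citation) and gone beyond it by sketching the source.
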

We refer to \emph{loc. cit.} for the relevant notions. We will also make use of the following lemma.
\begin{lem}\label{lem:gp extension}
Let $\Gamma$ be a finite group acting on a topological space $X$. If $\mathbb{P}$ is any simply connected space endowed with a free action of $\Gamma$, the finite étale cover $X\times \mathbb{P}\to (X\times \mathbb{P})/\Gamma$  gives rise to an exact sequence of fundamental groups:
$$1\To \pi_1(X)\To \pi_1\left((X\times \mathbb{P})/\Gamma\right)\To \Gamma\To 1.$$
This extension is unique and in particular does not depend on the choice of $\mathbb{P}$. If the action is already free on $X$, then this extension is nothing but the one corresponding to the finite étale cover $X\to X/\Gamma$.
\end{lem}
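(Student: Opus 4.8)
The plan is to realize $X\times\mathbb{P}\To(X\times\mathbb{P})/\Gamma$ as a Galois covering and to read off the extension directly from covering space theory. First I would note that, since $\Gamma$ acts freely on $\mathbb{P}$, the diagonal action on $X\times\mathbb{P}$ is free as well; as $\Gamma$ is finite (and the spaces are Hausdorff) this free action is automatically properly discontinuous, so the quotient map is a regular (Galois) covering with deck group $\Gamma$. For a connected such covering the standard homotopy exact sequence reads
$$1\To \pi_1(X\times\mathbb{P})\To \pi_1\bigl((X\times\mathbb{P})/\Gamma\bigr)\To \Gamma\To 1,$$
the map to $\Gamma$ being the monodromy (its surjectivity reflecting the connectedness of $X\times\mathbb{P}$). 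Since $\mathbb{P}$ is simply connected, the product formula for fundamental groups identifies $\pi_1(X\times\mathbb{P})$ with $\pi_1(X)$, which yields the announced sequence.

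For the independence of $\mathbb{P}$, I would compare two choices $\mathbb{P}$ and $\mathbb{P}'$ through the product $\mathbb{P}\times\mathbb{P}'$, which is again simply connected and carries a free diagonal $\Gamma$-action. The two $\Gamma$-equivariant projections forgetting one factor descend to maps of quotients
$$(X\times\mathbb{P})/\Gamma \longleftarrow (X\times\mathbb{P}\times\mathbb{P}')/\Gamma \To (X\times\mathbb{P}')/\Gamma,$$
and I would check that each descended projection is a fibre bundle, with fibre $\mathbb{P}'$ (resp. $\mathbb{P}$): local triviality follows from the freeness of the action, since over a small evenly covered chart downstairs the fibre is a single unidentified copy of the simply connected factor. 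As the fibres are connected and simply connected, the long exact homotopy sequence of a fibration shows that each of these maps induces an isomorphism on $\pi_1$. Because the projections upstairs are $\Gamma$-equivariant, one obtains commuting ladders of covering spaces, whence these isomorphisms intertwine both the monodromy maps to $\Gamma$ and the embedded copies of $\pi_1(X)$. Composing them produces an isomorphism of extensions fixing $\pi_1(X)$ and $\Gamma$, which is precisely the meaning of the two extensions coinciding.

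The final assertion is handled in the same spirit: when $\Gamma$ already acts freely on $X$, the $\Gamma$-equivariant projection $X\times\mathbb{P}\To X$ descends to a fibre bundle $(X\times\mathbb{P})/\Gamma\To X/\Gamma$ with simply connected fibre $\mathbb{P}$, inducing an isomorphism on $\pi_1$ compatible with the extension data; hence the extension built from $\mathbb{P}$ agrees with the one carried by the genuine étale cover $X\To X/\Gamma$.

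I expect the main technical point to be verifying that the descended projections are honest fibre bundles, so that the homotopy exact sequence genuinely applies, together with the careful check that the resulting isomorphisms on the middle groups respect the \emph{full} extension structure, i.e. that they simultaneously intertwine the inclusions of $\pi_1(X)$ and the surjections onto $\Gamma$, rather than being mere abstract isomorphisms. Once the equivariance of the projections is exploited to produce the commuting diagrams of coverings, this compatibility is forced; but it is the step one cannot skip if the conclusion is to be read as an equality of extensions and not just of groups.
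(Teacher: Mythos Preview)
Your argument is correct and, for the existence of the exact sequence and for the final assertion about free actions, matches the paper's proof exactly: both use that the descended projection $(X\times\mathbb{P})/\Gamma\to X/\Gamma$ is a fibre bundle with simply connected fibre and appeal to the homotopy long exact sequence.

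For the uniqueness of the extension, you take a slightly different route than the paper. The paper invokes the classifying space $B_\Gamma$ with its universal cover $E_\Gamma$: the universal property of $E_\Gamma$ produces a $\Gamma$-equivariant map $\mathbb{P}\to E_\Gamma$ for any simply connected free $\Gamma$-space $\mathbb{P}$, and hence a canonical map $(X\times\mathbb{P})/\Gamma\to (X\times E_\Gamma)/\Gamma$ (and further to $B_\Gamma$) identifying every extension with a single canonical one. You instead compare two arbitrary choices $\mathbb{P}$ and $\mathbb{P}'$ directly via the roof $(X\times\mathbb{P}\times\mathbb{P}')/\Gamma$ and its two projections. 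Your approach is more elementary in that it avoids classifying spaces, and it makes the verification of the fibre-bundle structure and of the compatibility with the extension data explicit; the paper's approach is terser and more conceptual, trading the roof construction for a single comparison with a universal object. Both arguments are the same at heart---an equivariant map between simply connected free $\Gamma$-spaces descending to a $\pi_1$-isomorphism---and either is perfectly adequate here.
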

\begin{proof}
Let $B_\Gamma$ be the classifying space of $\Gamma$ and $E_\Gamma\to B_\Gamma$ be its universal cover. Universal properties of the classifying space ensure the existence of canonical maps $X\times \mathbb{P}\to X\times E_\Gamma$ and $(X\times \mathbb{P})/\Gamma\to B_\Gamma$ making the corresponding diagram commutative. It shows readily that both group extensions are the same.

If the action is free on $X$, we can use the projection onto the first factor
$$\left(X\times \mathbb{P}\right)/\Gamma\To X/\Gamma.$$
It is a fibre bundle with fibre $\mathbb{P}$ and, using the homotopy exact sequence, we readily infer that
$$\pi_1\left((X\times \mathbb{P})/\Gamma\right)\simeq \pi_1(X/\Gamma).$$
\end{proof}

\begin{proof}[Proof of Corollary \ref{cor:kahler linéaire}]
We use the notation introduced in the preceding statement and we are in the situation where $\rho$ is injective. We can moreover assume that the finite étale cover $Y\to X$ is Galois, its Galois group to be denoted $\Gamma$. We make the following observations:
\begin{enumerate}
\item the Shafarevich and Iitaka fibrations being functorial, the group $\Gamma$ acts on $W$, $B$ and the fibration $f$ is equivariant with respect to both actions. Let us note however that the action is in general no longer free on both $W$ and $B$.
\item the fundamental group of $W$ is isomorphic to the one of $Y$.
\end{enumerate}
The last assertion is a consequence of the torsion freeness of $\pi_1(Y)$. We have indeed an exact sequence
$$1\To \pi_1(F)_Y\To \pi_1(Y)\To \pi_1^{orb}(W)\To 1$$
where $F$ is the general fibre of $sh_\rho$ and the orbifold structure on $W$ is induced by the fibration $sh_\rho$. The defining property of $F$ being the finiteness of $\pi_1(F)_Y$, we infer that this group is trivial. Finally the orbifold fundamental group $\pi_1^{orb}(W)$ is an extension
$$1\To K\To \pi_1^{orb}(W)\To \pi_1(W)\To 1$$
where $K$ is a group generated by torsion elements. As before, it implies that $K=1$ and that $\pi_1(Y)\simeq\pi_1(W)$.

We can now apply Theorem \ref{th:th principal}: $W$ can be deformed to a projective manifold $W_{alg}$ on which the group $\Gamma$ acts. To deal with the lack of freeness of the action of $\Gamma$ on $W_{alg}$, let us introduce a simply connected projective manifold $\mathbb{P}$ on which $\Gamma$ acts freely: such a manifold exists according to \cite{Ser} (see also \cite[Chapter IX, \S 4.2]{shafbook}). We can finally consider the quotient $X_{alg}:=\left(W_{alg}\times \mathbb{P}\right)/\Gamma$ as in Lemma \ref{lem:gp extension}: this is a smooth projective variety whose fundamental group is the extension
\begin{equation}\label{eq:extension group}
1\To \pi_1(W_{alg})\To \pi_1(X_{alg})\To \Gamma\To 1.
\end{equation}
Since the deformation is ($\Gamma$-equivariantly) topologically trivial, the extension (\ref{eq:extension group}) is the same when $W_{alg}$ is replaced with $W$. The Shafarevich map being $\Gamma$-equivariant and inducing an isomorphism between fundamental groups, we can plug $Y$ instead of $W_{alg}$ in the extension (\ref{eq:extension group}). Lemma \ref{lem:gp extension} shows that this extension is the same as the one corresponding to the cover $Y\to X$. It gives the desired isomorphism
$$\pi_1(X)\simeq \pi_1(X_{alg})$$
and ends the proof of Corollary \ref{cor:kahler linéaire}.
\end{proof}
\bibliographystyle{amsalpha}
\bibliography{bib_kod}
\end{document}